\newtheorem{lemma}{Lemma}[section]
\newtheorem{proposition}[lemma]{Proposition}
\newtheorem{theorem}[lemma]{Theorem}
\theoremstyle{definition}
\newtheorem{definition}[lemma]{Definition}
\newtheorem{example}[lemma]{Example}
\newtheorem{remark}[lemma]{Remark}
\newcommand{\dashedrightarrow}[1][2pt]{%
  \settowidth{\@tempdima}{$\longrightarrow$}\rightarrow
  \makebox[-\@tempdima]{\color{white}\rule[0.5ex]{#1}{1pt}\,}
  \phantom{\longrightarrow}
}
\newcommand{\stacks}[2][]{\cite[\href{http://stacks.math.columbia.edu/tag/#2}{\ifthenelse{\equal{#1}{}}{Tag #2}{#1, Tag #2}}]{stacks-project}}  
\let\phi\varphi
\let\varphi\phi
\newcommand{\QQ}{\mathbb{Q}}
\newcommand{\GG}{\mathbb{G}}
\newcommand{\PP}{\mathbb{P}}
\newcommand{\spec}{\operatorname{Spec}}
\newcommand{\End}{\operatorname{End}}
\newcommand{\im}{\text{im}}
\newcommand{\id}{\mathrm{id}}
\newcommand{\Hom}{\operatorname{Hom}}
\newcommand{\Gal}{\operatorname{Gal}}
\newcommand{\Res}{\operatorname{Res}}
\newcommand{\genus}{\operatorname{genus}}
\newcommand{\bijection}{\overset{\sim}{\longrightarrow}}
\newcommand{\isomorphism}{\bijection}
\newcommand{\rationalmap}{\dashedrightarrow}
\renewcommand{\tilde}{\widetilde}
\newcommand{\bmat}{\begin{pmatrix}}
\newcommand{\emat}{\end{pmatrix}}
\newcommand{\const}{^{\operatorname{const}}}
\newcommand{\alg}{^{\operatorname{alg}}}
\newcommand{\Jac}{\operatorname{Jac}}
\renewcommand{\div}{\operatorname{div}}
\newcommand{\dd}{\mathrm{d}}
\newcommand{\Lau}[1]{(\!(#1)\!)}
\let\oldsubset\subset
\let\subset\subseteq
          \newtheorem{remarks}[definition]{Remarks}
\title{Autonomous first order differential equations}
\author[M.\,P.\,Noordman]{Marc Paul Noordman}
\address{Bernoulli Institute, University of Groningen, P.O. Box 407, 9700 AG Groningen, The Netherlands.}
\email{m.p.noordman@rug.nl, m.van.der.put@rug.nl, j.top@rug.nl}
\author[M.\,van der Put]{Marius van der Put}
\author[J.\,Top]{Jaap Top}
\date{April 17, 2019}
\keywords{First order ordinary differential equation, autonomous differential equation, non-linear scalar differential equation, differential field, generalized Jacobian, algebraic curve, algebraic dependence}
\subjclass[2010]{12H20, 34A26, 34M15, 14H40}
\begin{document}

\maketitle

\begin{abstract}  The problem of algebraic dependence of solutions to (non-linear) first order autonomous equations over an algebraically closed field of characteristic zero is given a `complete' answer, obtained independently of model theoretic results on differentially closed fields. Instead, the geometry of curves and generalized Jacobians
provides the key ingredient.  Classification and formal solutions of autonomous equations are treated. The results are applied to answer a question on $D^n$-finiteness of solutions of first order differential equations.

\end{abstract}

\section{Introduction and summary}\label{1}

Let $C$ be an algebraically closed field of characteristic zero. For a linear differential equation of order $n$ over (say) the differential field 
$(C(z),\frac{\dd}{\dd z})$, there is an extension of differential fields $PVF\supset C(z)$, called a Picard--Vessiot field,
 such that $C$ is also the field of constants of $PVF$ and $PVF$ is `minimal' such that the equation has  linearly independent solutions
 $y_1,\dots ,y_n$ over $C$. For any extension of differential fields $PVF\subset L$, the  $y_1,\dots ,y_n$ form  a basis, now over the field of constants of $L$, of all solutions in $L$ of the equation.  The transcendence degree of $PVF$ over $C(z)$
 is at most $n^2$. 
 
   The algebraic relations between these solutions and their derivatives are measured by the differential Galois group. This linear algebraic group 
 subgroup of ${\rm GL}_n(C)$ will, in general, become smaller if $C(z)$ is replaced by a differential field extension. For details, see for example \cite{vdP-S}. \\

   For  non-linear differential equations, the theory of Galois groupoids,  developed by B.~Malgrange, H.~Umemura and many others, replaces the usual differential Galois groups (see for instance \cite{Casale} and references therein). It is concerned with
 solutions and their algebraic relations. Here we will study the simplest type of non-linear equations: autonomous first order differential equations
  \[ P(u, u') = 0 \]
 where $P\in C[X,Y]$ is irreducible, involves both $X$ and $Y$ and $u'$ stands for $\frac{\dd u}{\dd z}$.  
The theory of Galois groupoids does not seem to shed much light on these equations. The following example of M.~Rosenlicht 
\cite{R1} shows that non-linear differential equations are rather different from
linear ones. It states that any set of non-constant solutions in any differential field (say, a differential extension field of $C(z)$) of the differential equation
\[ u' = u^3 - u^2\]
is {\it algebraically independent over} $C$. We give a self-contained elementary proof of this in \cref{Lemma Rosenlicht} and \cref{Rosenlichts example}.

Rosenlicht's result was found in relation with the model theory of {\it 
differentially closed fields in characteristic zero} (DCF$_0$). It has as consequence that the differential closure
$\mathcal{U}$ of, say, $\mathbb{Q}$, admits proper differentially closed subfields. Since then, first order autonomous
differential 
equations have become an important subject in the study of the model theory DCF$_0$. The papers 
\cite{F-D-R,F-M,H-I,N-P}  of E. Hrushovsky, M. Itai, J. Freitag and others are concerned with algebraic relations between solutions of autonomous equations. The language and the proofs are embedded in model theory. \\

The present paper aims to prove results on algebraic relations between  solutions, independently of the model theoretic framework. Instead the main tools are the geometry of curves and 
their (generalized) Jacobian varieties.  One observes that $P\in C[X,Y]$ and $P(u,u')=0$ defines a differential field, the fraction field $C(x,y)$ of  $C[x,y]=C[X,Y]/(P)$  with the derivation $D$ given by $D(x)=y$. Let the pair $(X,\omega)$ denote the 
(irreducible, smooth, projective) curve $X$ over $C$ with function field $C(x,y)$ and $\omega$ the rational 1-form on $X$,
dual to the rational derivation $D$ on $X$.  The relation between $D$ and $\omega$ can be expressed as 
$\omega=\frac{\dd f}{D(f)}$ for any $f\in C(x,y)\setminus C$.

Two first order autonomous differential equations are considered to be 
``the same''  if the associated
pairs $(X_i,\omega_i)$ for $i=1,2$ are isomorphic, in the sense that there exists an isomorphism $\phi\colon X_1 \isomorphism X_2$ such that $\omega_1 = \phi^*\omega_2$. Moreover, by Lemma 5.2 in \cite{N-N-P-T} any $(X,\omega)$ (with $\omega \neq 0$) is associated to some autonomous equation.\\

\noindent {\it Example}: $u'=u^3-u^2$ corresponds to $(\mathbb{P}^1,\frac{\dd x}{x^3-x^2})$ and $D=(x^3-x^2)\frac{\dd }{\dd x}$. For $C=\mathbb{C}$, an analytic solution
$u(z)$ satisfies a formula  $z+c=\int ^{u(z)}_*\frac{\dd x}{x^3-x^2}$. Thus in the complex analytic case, solutions to a first order autonomous differential equation
are inverse functions of integrals of algebraic differential forms. This classical topic includes the Weierstrass functions.  \\

Let $D$ be the polar divisor of a given rational 1-form $\omega$ on $X$. Then $\omega$ can be identified with a translation invariant regular 1-form on the \emph{generalized Jacobian variety} $\Jac(X,D)$ of $X$ with respect to the effective divisor $D$. See \cite{Serre-AlgebraicGroups} for the theory of generalized Jacobian varieties.  
Therefore generalized Jacobian varieties enter our study in a natural way.\\

 In the next section, we state our main results (Theorem 2.1 and 2.1b) and present the proof, modulo technical aspects that are worked out in Sections 3 and 4. In Section 5 we discuss existence of ``new forms''. Results on formal solutions and differentially closed fields are proven and discussed in Section 6. The final section applies the theory to answer a question about $D^n$-finiteness of solutions of differential equations.

 \section{Main results and arguments}

  We may identify any non-constant solution $u$ of $P(u,u') = 0$ in a differential field $(F,\  ')$ containing $C$ 
 with the $C$-linear homomorphism of differential fields $\phi\colon C(x,y)\rightarrow F$ given by $\phi (x)=u$ and
 $\phi(y)=u'$. Therefore, given a pair $(X, \omega)$ we define a \emph{non-constant solution in $F$} to be a $C$-linear morphism of differential fields $C(X) \to F$. Given a collection $(X_i, \omega_i)$ of differential equations, we call a set $\{\phi_i\}$ of solutions $C(X_i) \to F$ \emph{algebraically independent over $C$} if for some choice of non-constant $x_i \in C(X_i)$ the set $\{\phi_i(x_i)\} \subset F$ is algebraically independent over $C$. Note that this notion is independent of the choice of the coordinate $x_i \in C(X_i)$, because each $C(X_i)$ has transcendence degree 1 over $C$. Besides non-constant solutions, in Section~6 we will also define and study constant solutions. 
 
 As in the linear case, we are mostly interested in differential extensions fields $F\supset C(z)$ such that $C$ is the field of constants of $F$, e.g., the algebraic closure $C\Lau{z}\alg$ of $C\Lau{z}$ (see \cref{Section-Formal-Solutions}). In model theory one  chooses
 for $F$ a differential closure of, say, $C(z)$.\\
 
  Consider for $i=1,2$ the pairs $(X_i,\omega_i)$. We call
  $(X_2,\omega _2)$ a \emph{pull back} of $(X_1,\omega _1)$ if there is a non-constant morphism
  $m\colon X_2\rightarrow X_1$ such that $m^*\omega_1=\omega_2$.  In terms of function fields, this means that the homomorphism $\phi \colon C(X_1)\rightarrow C(X_2)$
  induced by $m$ is a morphism of differential fields for the derivations dual to the $\omega_i$.  
  
  For an equation $(X,\omega)$ and a differential field $F\supset C$ we write $(X,\omega)(F)^{nc}$ for the set of non-constant solutions of $(X,\omega)$ in $F$.   If $(X_2,\omega _2)$ is a pull back of $(X_1,\omega _1)$,
then $\phi$ induces a map  $(X_2,\omega_2)(F)^{nc}\rightarrow (X_1,\omega_1)(F)^{nc}$. If $F=F^{alg}$, then
all fibres consist of $[C(X_2):C(X_1)]$ elements. Indeed, the ramification points of $\phi$ are all defined over $C$, and hence do not correspond to non-constant solutions.\\

\noindent We consider the following types of autonomous differential equations $(X, \omega)$: \label{listoftypes}
(1). $(X, \omega)$ is \emph{exact} if $\omega = \dd f$ for some $f \in C(X)$;\\
    equivalently, $\omega$ is a pull-back of a translation-invariant 1-form on $\GG_a$.\\
(2). $(X, \omega)$ is \emph{of exponential type} if $\omega = \frac{\dd f}{cf}$ for some $f \in C(X)$ and $c \in C^*$;\\
    equivalently, $\omega$ is a pull-back of a translation-invariant 1-form on $\GG_m$. \\
(3). $(X, \omega)$ is \emph{of Weierstrass type} if $\omega = \frac{\dd f}{g}$ for $f, g \in C(X)$,  $g^2 = f^3 +af + b$ for $a, b \in C$ with $4a^3 + 27b^2 \neq 0$; equivalently it is a pull-back of a translation-invariant 1-form on an elliptic curve. \\
(4).  In the remaining case, $(X,\omega)$ is called \emph{of general type}.\\
 
 A similar classification is present in \cite{F-D-R, H-I}. For example, one easily verifies that Rosenlicht's equation 
 $(\mathbb{P}^1,\frac{\dd x}{x^3-x^2})$ is of general type. We note that the autonomous equations satisfying the Painlev\'e property  are (after normalisation, \cite{M-P}): 
 $u'=1$, $u'=cu$ for $c\in C^*$, and $(u')^2= u^3+au+b$ for $a,b\in C$ with $4a^3+27b^2\neq 0$. This means that (1)--(3) are the pull backs of these basic equations. 
 
 The solutions for equations of type (1)-(3) in a sufficiently large differential field $F\supset C$ are easily found. In case (1), one identifies $C(f)$
with the subfield $C(z)\subset F$ with $z'=1$; a non-constant solution $\phi$ maps $f$ to $z+c$ for some $c\in C$ and extends in finitely many ways to a  differential homomorphism $\phi \colon C(X)\rightarrow C(z)^{alg}\subset F$. 
The cases (2) and (3) are similar, with $C(z)$ replaced by respectively $C(e^{cz})$ and $C(E)$ where $E$ is an elliptic curve.\\  
 
 An interesting algorithmic question is how to decide, for a given pair $(X, \omega)$, its place in the above classification. For the exact case, this is an easy application of Coates algorithm (\cite{Co}). The exponential case can be handled as in the proof of Lemma 6.6 in \cite{B-D}. However, we are not aware of an algorithmic way to distinghuish between the Weierstrass and the general case. \\
 
As in \cite{F-D-R}, we call $(X,\omega)$ {\it old} if it is  a proper pull back (degree $\geq 2$); otherwise $(X,\omega)$
is called {\it new}.  \cref{proposition 2.2} below shows that every  equation of general type is, in a unique way, the pull back of a new equation of general type. 
{\it The main result} of this paper, which generalizes results from
\cite{R1}, \cite{H-I}, is: 

\begin{theorem} \label{theorem 1.1}
 Let $(X,\omega)$ be of general type and new. Any set  $\{\rho_1,\dots ,\rho_n\}$ of distinct non-constant solutions 
 of $(X, \omega)$ in a differential field  $F\supset C$ is algebraically independent over $C$.\end{theorem}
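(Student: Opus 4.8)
The plan is to prove algebraic independence of $\{\rho_1,\dots,\rho_n\}$ by induction on $n$, where the crucial geometric input is that a new equation of general type has generalized Jacobian with no nontrivial subgroup structure forcing relations among solutions. Concretely, I would first reduce to the following statement: if $u_1 = \rho_1(x),\dots,u_n = \rho_n(x)$ for a fixed coordinate $x \in C(X)$, then $\trdeg_C C(u_1,\dots,u_n,u_1',\dots,u_n') = 2n$, or equivalently (since each $u_i'$ is algebraic over $C(u_i)$) that $u_1,\dots,u_n$ are algebraically independent over $C$. Suppose for contradiction that they are not; take $n$ minimal, so $u_1,\dots,u_{n-1}$ are algebraically independent over $C$ while $u_n$ is algebraic over $K := C(u_1,\dots,u_{n-1})$.

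\textbf{Setting up the geometry.} I would consider the product of curves $X^{n}$ with the rational derivation $D^{(n)}$ induced by $\omega$ on each factor, equivalently the $1$-form data $(X^{n}, \omega_1,\dots,\omega_n)$; the solutions $\rho_1,\dots,\rho_n$ together define a morphism of differential fields $C(X^{n}) \to F$. The image of the corresponding "solution tuple" lies on an irreducible subvariety $V \subseteq X^{n}$ which is invariant under the derivation $D^{(n)}$ (this is the algebraic locus of the tuple $(u_1,\dots,u_n)$). Algebraic dependence means $\dim V \le n-1$. Passing to the generalized Jacobians, $\omega$ corresponds to a translation-invariant $1$-form on $J := \Jac(X,D)$ where $D$ is the polar divisor; the map $X \to J$ (Abel--Jacobi relative to a base point) carries $\omega$ to the invariant form, and the product map sends $V$ into $J^{n}$. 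The invariance of $V$ under $D^{(n)}$, combined with the fact that the flow of $D^{(n)}$ on $J^{n}$ is the linear flow $t \mapsto (p_1 + tv,\dots,p_n+tv)$ for the fixed tangent vector $v$ dual to $\omega$, forces the Zariski closure of the image of $V$ in $J^{n}$ to be a coset of an algebraic subgroup $H \subseteq J^{n}$ containing the diagonal direction $(v,\dots,v)$.

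\textbf{Using "new" and "general type".} Since $\dim V \le n-1 < n = \dim(\text{diagonal image's natural span})$ would be violated unless $H$ is a \emph{proper} subgroup of $J^{n}$ of the appropriate dimension, I would analyze the possible proper subgroups $H \le J^{n}$ through $(v,\dots,v)$ and project them to pairs of factors. A proper such subgroup yields, after projecting to two coordinates $i,j$, a proper subgroup of $J \times J$ through $(v,v)$ whose image in each factor is all of $J$; by the structure theory of (generalized) Jacobians and the fact that $J$ has no complementary isogeny factors compatible with $\omega$ unless $(X,\omega)$ is a pull-back, such a subgroup forces a correspondence between the two copies of $X$, i.e. an isogeny-type relation $\psi\colon J \to J$ with $\psi^*\omega = \pm\omega$, hence a common pull-back structure on $(X,\omega)$. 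For $J$ simple this makes $\psi$ an automorphism, and one checks (using that $\rho_i \ne \rho_j$ are distinct solutions and that $\omega$ is preserved) that this contradicts newness of $(X,\omega)$ — the only equations that are new of general type are exactly those for which no such nontrivial $(X,\omega)$-correspondence exists. The base cases $n=1$ (a single non-constant solution is transcendental over $C$, as in \cref{Lemma Rosenlicht}) anchor the induction, and the reduction step above propagates it.

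\textbf{Main obstacle.} The hard part is the classification of invariant subvarieties / subgroups: showing rigorously that $D^{(n)}$-invariance of $V$ forces its Jacobian image to be a translate of a subgroup, and then that any \emph{proper} subgroup through the diagonal direction $(v,\dots,v)$ descends to a genuine pull-back of $(X,\omega)$, contradicting "new of general type." This requires the generalized-Jacobian machinery of Section 3--4 (properties of $\Jac(X,D)$, behavior of the Abel map on $\omega$, and the dictionary between subgroups/correspondences and pull-backs of $(X,\omega)$), and care that everything is happening over $F$ with constant field $C$ so that the tangent vector $v$ and the subgroup $H$ are actually defined over $C$ — this is where one uses that ramification points of any pull-back are defined over $C$, so no "spurious" constant solutions interfere. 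Once the subgroup is pinned down over $C$, invoking \cref{proposition 2.2} (unique factorization into a new equation) closes the argument: a proper subgroup would exhibit $(X,\omega)$ as a proper pull-back, contradicting the hypothesis.
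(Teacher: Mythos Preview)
Your proposal has a genuine gap at its central geometric step. The claim that $D^{(n)}$-invariance of $V$ forces the Zariski closure of its image in $J^n$ to be a coset of an algebraic subgroup is not true for arbitrary invariant subvarieties. Invariance under the diagonal translation flow by $(v,\dots,v)$ only says that the image $W \subset J^n$ is saturated for the quotient $J^n \to J^n/\Delta$ by the diagonal one-parameter subgroup; equivalently, $W$ is the preimage of some subvariety of $J^{n-1}$. That subvariety can be \emph{anything}, not just a coset of a subgroup. For a concrete illustration, take $J = \GG_m$ and $n=3$: the surface $\{(x,y,z) : (x/z)^2 + (y/z) = 1\}\cap \GG_m^3$ is invariant under the diagonal $\GG_m$-action but is not a coset of any subgroup. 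You offer no reason why the image of the particular $V$ arising from a tuple of solutions should be more special than this, and indeed the paper never establishes or uses such a statement. Your subsequent ``project to pairs'' step inherits the same problem: even granting a subgroup $H$, its projection to two factors could be all of $J\times J$, so you have not produced a nontrivial correspondence.

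The paper's route is different in both organization and content. For $n=2$, two algebraically dependent solutions give a curve $Y$ with maps $\phi_i\colon Y \to X$ satisfying $\phi_1^*\omega = \phi_2^*\omega$; Theorem~\ref{theorem 2.1} then constructs a common quotient $(\tilde X,\tilde\omega)$ not by finding a subgroup of $J\times J$ but by examining the \emph{cokernel} $A$ of the push-forward $\Jac(Y,E)\to \Jac(X,D)\times\Jac(X,D)$, observing that $(\omega,\omega)$ descends to an invariant $1$-form on $A$, and using $\dim A \geq 2$ (when it occurs) to manufacture a nonconstant function in $C(X)\cap C(X)$ for the two embeddings. For $n>2$, the paper does \emph{not} work inside $J^n$ at all: it sets $K$ to be the algebraic closure of the field generated by $\rho_3,\dots,\rho_n$, uses the induction hypothesis together with \cref{3.2} to show the constants of the relevant extension are still $C$, applies \cref{3.1} to identify the two pulled-back forms over $K$ up to a constant, invokes the $n=2$ case over $K$, and finally descends the resulting isomorphism back to $C$ via \cref{3.3}. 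None of this requires classifying invariant subvarieties of $J^n$.
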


\begin{remark} \label{ramified covering}
Let $(X,\omega)$ be any pair of general type. By \cref{proposition 2.2} below, it is the pull back of a $(X',\omega ')$ which is new and of general type.
 Let $F\supset C$ be an algebraically closed differential field. The pull back function induces a ``ramified covering''
$\tau\colon  (X,\omega)(F)^{nc}\rightarrow (X',\omega ')(F)^{nc}$ of degree $[C(X):C(X')]$ and the elements
of $(X',\omega')(F)^{nc}$ are algebraically independent over $C$. Thus elements of
$(X,\omega)(F)^{nc}$ lying in the same fiber of $\tau$ are algebraically dependent over $C$ (with a relation of degree
bounded by $[C(X):C(X')]$) and elements in distinct fibers are algebraically independent over $C$. One can
compare this with the various properties defined in \cite{F-D-R,F-M,H-I,N-P,Mc,S}).
\end{remark}
 
 \begin{remark}\label{finitely many solutions}
 Let $(X,\omega)$ be of general type. Then for any differential field $F \supset C$ of finite transcendence degree over $C$, the set $(X,\omega)(F)^{nc}$ of non-constant solutions is finite. If $(X,\omega)$ is new, this follows immediately from Theorem 1.1. In the general case, one applies \cref{proposition 2.2} to show that $(X, \omega)$ is the pull-back of a new and general type $(X', \omega')$ via a map $\tau: X \to X'$. The reasoning in \cref{ramified covering} now shows that 
 $\#(X,\omega)(F)^{nc} \leq \deg(\tau) \cdot \#(X',\omega')(F)^{nc}$ is finite.
 \end{remark}

 In fact, a stronger statement than \cref{theorem 1.1} will be proved, namely the following: \\
 
 \noindent {\bf Theorem 2.1b.} {\it Let $(X_1, \omega_1), \ldots, (X_n, \omega_n)$ all be of general type and new. Let $\rho_i$ be a non-constant solution of $(X_i, \omega_i)$ in some fixed differential field $F \supset C$, for $i=1,\ldots, n$. If the set $\{\rho_1, \ldots, \rho_n\}$ is algebraically dependent over $C$, then there are indices $i \neq j$ and an isomorphism $\phi\colon X_i \isomorphism X_j$ such that $\phi^* \omega_j = \omega_i$ and $\rho_i \circ \phi^* = \rho_j$. }\\
 
 \begin{remark}\label{strengthening}
  The statement of Theorem 2.1b can be strengthened: to the list $(X_1, \omega_1), \ldots, (X_n, \omega_n)$ one can add one copy of the pair $(\PP^1, \dd t)$, several pairs of the form $(\PP^1, \frac{\dd t}{c_it})$ for elements $c_i \in C^*$ that are linearly independent over $\QQ$, several pairs of the form $(E_i, \omega_i)$ with the $E_i$ pairwise non-isogenous elliptic curves, etc.  \hfill $\square$
 \end{remark}

 We show that Theorem 2.1b implies Theorem~2.1. Indeed, let $\{\rho_1,\ldots, \rho_n\}$ be non-constant solutions of $(X,\omega)$ in a differential field $F \supset C$, where $(X,\omega)$ is of general type and new. Assume that the set of $\rho_i$ is algebraically dependent over $C$. Applying Theorem 2.1b, we obtain indices $i$ and $j$ such that $\rho_i$ and $\rho_j$ differ only by an automorphism $\phi\colon X \isomorphism X$. Moreover, this automorphism satisfies $\phi^*\omega = \omega$. We claim that this implies that $\phi$ has finite order. Indeed, the only cases of an automorphism $\sigma$ of infinite order are:\\
(i) $\mathbb{P}^1,\ \sigma (x)=x+1$; (ii) $\mathbb{P}^1, \ \sigma (x)=qx$, $q\in C^*$ not a root of unity; \\ 
(iii) $X$ is an elliptic curve and $\sigma$ is a translation over a non-torsion point.\\
Any $\omega$ invariant under such an automorphism can be written as $f\dd x$, $f\frac{\dd x}{x}$ or $f\frac{\dd x}{y}$ for the cases respectively, for some $f$ in the 
corresponding function field. If $\omega$ is $\sigma$-invariant, then so is $f$ and thus $f$ is constant. Therefore, $\sigma$-invariant $\omega$'s in these cases are the (1)--(3) in the classification given on page~\pageref{listoftypes}. Since $(X,\omega)$ is of general type, $\phi$ has finite order. Therefore $\omega$ descends to the quotient curve $X/\langle \phi\rangle$. This contradicts the assumption that $\omega$ is new unless $\phi$ has order 1. Therefore $\phi = \id_X$, and we get that $\rho_i = \rho_j$. \\
 
 The proof of Theorem 2.1b proceeds in two steps. First, we establish the theorem for the case $n = 2$. Then we reduce to this case, by inductively showing that an algebraic relation between $n$ solutions implies an algebraic relation between two of those solutions. \\

\noindent {\it Translating algebraic dependence of solutions into geometry}.\\
 For $i=1,2$, let $\rho_i\colon C(X_i)\rightarrow F$ be a non-constant solution of the equation $(X_i,\omega_i)$. Suppose that these are algebraically dependent over $C$.
This means that the subfield $F_0$ of $F$ generated by $\im(\rho _1)\cup \im(\rho_2)$ is finite over 
$\im(  \rho_1)$ and over $\im( \rho_2)$. Now $F_0$ is invariant under the derivation of $F$ since this holds for
both $\im(\rho_1)$ and $\im(\rho_2)$. Then $F_0$ with its derivation and the maps $\rho_i$ defines a pair 
$(Y,\omega)$ and morphisms $\phi_i\colon (Y,\omega)\rightarrow (X_i,\omega_i)$ such that 
$\omega =\phi_1^*\omega_1=\phi_2^*\omega_2$. The key step in the proof of \cref{theorem 1.1} is the following theorem, the proof of which is deferred to Section~3:\\

  \noindent 
 {\bf Theorem \ref{theorem 2.1}}. {\it There is a curve $\tilde X$ over $C$, a meromorphic 1-form $\tilde \omega$ on $\tilde X$ and finite morphisms $\psi_i\colon X_i \to \tilde X$ such that the diagram}
\begin{center}
    \begin{tikzpicture}
      \matrix (m) [matrix of math nodes,row sep=2em,column sep=2em,minimum width=3em]
      {
         Y & X_1 \\
         X_2 &  \tilde{X}\phantom{{}_{\frac{3}{2}}}\\};
      \path[-stealth]
        (m-1-1) edge node [above] {$\phi_1$} (m-1-2)
                edge node [left]  {$\phi_2$} (m-2-1)
        (m-1-2) edge node [right] {$\psi_1$} (m-2-2)
        (m-2-1) edge node [below] {$\psi_2$} (m-2-2);
    \end{tikzpicture} 
\end{center}
{\it is commutative and such that $\omega_i = \psi_i^*\tilde \omega$ for $i = 1,2$. }  \\

If in the above theorem the pairs $(X_1, \omega_1)$ and $(X_2, \omega_2)$ are both new, then both $\psi_1$ and $\psi_2$ are isomorphisms. This establishes Theorem 2.1b for $n = 2$.

\begin{proof}[Proof of the induction step for Theorem 2.1b]$ $\\
We now use induction to prove Theorem 2.1b for any $n > 2$. We may suppose  that the differential field $F$ is algebraically closed. 
Non-constant solutions $\rho_1,\dots ,\rho_n$ of $(X_1, \omega_1), \ldots, (X_n, \omega_n)$ in $F$ with $n>2$ are given. By induction, we may assume that all proper subsets of $\{\rho_1, \ldots, \rho_n\}$ are algebraically independent. 
Let $K$ be the smallest algebraically closed, differential subfield of $F$ containing the 
solutions $\rho_3,\dots ,\rho_n$. Write $C(X_i)=C(x_i,y_i)$ for $i = 1,\ldots, n$. Then $K$ is the algebraic closure of 
$C(\rho_3(x_3),\rho_3(y_3),\dots ,\rho_n(x_n),\rho_n(y_n))$. The induction hypothesis implies that the transcendence degrees 
of $K(\rho_1(x_1),\rho_1(y_1))$ and $K(\rho_2(x_2),\rho_2(y_2))$ over $C$ are both $n-1$.

 Let $\tilde{C}\supset C$
be the constants of the algebraic closure of $K(\rho_2(x_2),\rho_2(y_2))$. If $\tilde{C}\neq C$, then  
the solutions $\rho_2,\dots ,\rho_n$ of $(\tilde{C}\times _CX_2,\omega_{2,\tilde{C}}), \ldots, (\tilde{C}\times _CX_n,\omega_{n,\tilde{C}})$ are algebraically dependent over $\tilde C$. This contradicts the induction hypothesis since by \cref{3.2} parts (4) and (5), $(\tilde{C}\times _CX_i,\omega_{i,\tilde{C}})$ is also
of general type and new for each $i$. Thus $\tilde{C}=C$.\\

Assume that $\rho_1,\dots ,\rho_n$ are algebraically dependent over $C$.\\
Then $\rho_1,\rho_2$ are algebraically dependent over $K$ and the differential field 
$F_0:=K(\rho_1(x_1),\rho_1(y_1),\rho_2(x_2),\rho_2(y_2))$ is a finite extension of the two fields $K(\rho_1(x_1),\rho_1(y_1))$ and
$K(\rho_2(x_2),\rho_2(y_2))$.

Consider for each $i$ the pair $(X_{i,K},\omega_{i,K})$, where $X_{i,K}=K\times_CX_i$ and $\omega_{i,K}$ is $\omega_i$ regarded as a 1-form on $X_{i,K}$ over $K$. Let $Y$  denote the curve over $K$ with function field $F_0$.
Now $\rho_1$ and $\rho_2$ define morphisms $m_i\colon Y\rightarrow X_{i,K}$. Then $m_i^*\omega_{i,K}, \ i=1,2$ are
two elements of the 1-dimensional $F_0$-vector space $\Omega_{F_0/K}$. Hence $m_1^*\omega_{1,K} =c\cdot m_2^*\omega_{2,K}$
for some $c\in F_0^*$.

 The derivation $D$ of $F_0$ as subfield of $F$ satisfies $D(K)\subset K$. According to \cref{3.1},
there is a natural action, denoted by $\tilde{D}$, of $D$ on $\Omega^1_{F_0/K}$. The formula for $\tilde{D}$ is $\tilde{D}(\sum_k a_k\dd b_k)=\sum_k (D(a_k)\dd b_k+a_k\dd D(b_k))$ for any element $\sum_k a_k\dd b_k\in \Omega^1 _{F_0/K}$.
In our case, for $C(X_i) = C(x_i, y_i)$ we have $\omega_i = \frac{\dd x_i}{D(x_i)}$, and so the derivation $D_i$ on $C(X_i)$ is given by $D_i = D(x_i) \frac{\dd}{\dd x_i}$. One easily
computes that $\tilde{D_i}(\omega_i)=0$. Then also $\tilde{D}(m_i^*\omega_{i,K})=0$ for $i=1,2$. 
It follows that $D(c)=0$ and thus $c\in (F_0\const)^* = C^*$.\\

Now the pull backs of $c\cdot \omega_{1,K}$ by $m_1$ and of $\omega_{2,K}$ by $m_2$ coincide.  According to \cref{3.2},
$(X_{i,K},\omega_{i,K})$ is of general type and new for $i=1,2$. Then applying Theorem \ref{theorem 2.1} to $(X_{1,K}, c\omega_{1,K})$ and $(X_{2,K}, \omega_{2,K})$, one obtains an isomorphism $\phi\colon X_{1,K} \isomorphism X_{2,K}$ such that $m_1\circ \phi =m_2$ and $\phi ^*\omega_K=c\cdot \omega_K$. \\

Applying \cref{3.3}, with $S_i$ the support of the divisor of $\omega_i$ shows that
$\phi$ descends to an isomorphism $X_1 \to X_2$. From $m_1\circ \phi=m_2$ it follows that the subfields
$C(\rho_1(x_1),\rho_1(y_1))$ and $C(\rho_2(x_1),\rho_2(y_1))$ of $F$ coincide. Hence $\rho_1$ and $\rho_2$ are algebraically dependent already over $C$, and so the $n = 2$ case of Theorem 2.1b gives us the result. 
\end{proof}


\section{A geometric theorem}
\begin{theorem}\label{theorem 2.1}
Let $X_1$ and $X_2$ be curves over $C$, and $\omega_1$ and $\omega_2$ non-zero meromorphic 1-forms on $X_1$ and $X_2$ respectively. Suppose that there is a curve $Y$ over $C$ and finite morphisms $\phi_i\colon Y \to X_i$ for $i = 1,2$ such that $\phi_1^*\omega_1 = \phi_2^*\omega_2$. Then there is a curve $\tilde X$ over $C$, a meromorphic 1-form $\tilde \omega$ on $\tilde X$ and finite morphisms $\psi_i\colon X_i \to \tilde X$ such that the diagram
\begin{center}
    \begin{tikzpicture}
      \matrix (m) [matrix of math nodes,row sep=2em,column sep=2em,minimum width=3em]
      {
         Y & X_1 \\
         X_2 &  \tilde{X}\phantom{{}_{\frac{3}{2}}}\\};
      \path[-stealth]
        (m-1-1) edge node [above] {$\phi_1$} (m-1-2)
                edge node [left]  {$\phi_2$} (m-2-1)
        (m-1-2) edge node [right] {$\psi_1$} (m-2-2)
        (m-2-1) edge node [below] {$\psi_2$} (m-2-2);
    \end{tikzpicture} 
\end{center}
is commutative and such that $\omega_i = \psi_i^*\tilde \omega$ for $i = 1,2$. 
\end{theorem}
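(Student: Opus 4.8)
The plan is to move everything into generalized Jacobian varieties, produce there a common semi-abelian quotient on which the form descends, and then recover $\tilde X$ as the resulting image curve. Write $D_i$ for the polar divisor of $\omega_i$, and following \cite{Serre-AlgebraicGroups} let $J_i = \Jac(X_i, D_i)$, with Abel--Jacobi morphism $a_i\colon X_i\setminus|D_i|\to J_i$, so that $\omega_i = a_i^*\eta_i$ for a unique translation-invariant $1$-form $\eta_i$ on $J_i$. Put $\omega := \phi_1^*\omega_1 = \phi_2^*\omega_2$ on $Y$. The composites $a_i\circ\phi_i$ are morphisms from $Y$ minus a finite set to the semi-abelian varieties $J_i$; choosing a sufficiently large modulus $\mathfrak m$ on $Y$, we obtain a single generalized Jacobian $J := \Jac(Y, \mathfrak m)$, with Abel--Jacobi morphism $a\colon Y\setminus|\mathfrak m|\to J$, through which both $a_1\circ\phi_1$ and $a_2\circ\phi_2$ factor. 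After a suitable choice of base points, write $a_i\circ\phi_i = \alpha_i\circ a$ for homomorphisms $\alpha_i\colon J\to J_i$; these are surjective because the $\phi_i$ are. Set $\eta := \alpha_1^*\eta_1$, an invariant $1$-form on $J$. Then $a^*\eta = \phi_1^*\omega_1 = \omega\ne 0$, so $\eta\ne 0$, and since $a^*$ is injective on translation-invariant $1$-forms, $a^*(\alpha_2^*\eta_2) = \omega = a^*\eta$ forces $\alpha_2^*\eta_2 = \eta$ as well; thus $\eta = \alpha_i^*\eta_i$ for both $i$.

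The decisive step is a dimension count. Regard the nonzero invariant $1$-form $\eta$ as a nonzero linear functional on $\operatorname{Lie}(J)$. Since $\eta = \alpha_i^*\eta_i$, this functional vanishes on $\operatorname{Lie}(\ker\alpha_i)$ for $i=1,2$. Let $B\subseteq J$ be the closed subgroup generated by $\ker\alpha_1$ and $\ker\alpha_2$; then $\operatorname{Lie}(B) = \operatorname{Lie}(\ker\alpha_1)+\operatorname{Lie}(\ker\alpha_2)$ lies in the hyperplane $\ker(\eta)$ of $\operatorname{Lie}(J)$, so $\dim B\le\dim J - 1$ and $A := J/B$ is a semi-abelian variety of dimension at least $1$. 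As $\eta$ vanishes on $\operatorname{Lie}(B)$, it descends along the quotient $\pi\colon J\to A$ to a nonzero translation-invariant $1$-form $\eta_A$ with $\pi^*\eta_A = \eta$; and since $\ker\alpha_i\subseteq B$, the map $\pi$ factors as $\pi = \beta_i\circ\alpha_i$ for surjections $\beta_i\colon J_i\to A$, whence $\beta_i^*\eta_A = \eta_i$ (again by injectivity of $\alpha_i^*$ on invariant forms).

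It remains to extract $\tilde X$. Each morphism $\beta_i\circ a_i\colon X_i\setminus|D_i|\to A$ is non-constant, since it pulls $\eta_A$ back to $\omega_i\ne 0$. Fix a projective compactification $\overline A$ of $A$ and let $Z_i\subseteq\overline A$ be the closure of the image of $\beta_i\circ a_i$, an irreducible curve. On $Y$ minus a finite set $\beta_i\circ a_i\circ\phi_i = \beta_i\circ\alpha_i\circ a = \pi\circ a$, the same map for $i=1$ and $i=2$, so since the $\phi_i$ are dominant this forces $Z_1 = Z_2 =: Z$. Let $\tilde X$ be the normalization of $Z$ --- a smooth projective curve over $C$ --- and let $\tilde\omega$ be the meromorphic $1$-form on $\tilde X$ obtained by pulling back the restriction of $\eta_A$ to $Z$; it is nonzero because $(\beta_i\circ a_i)^*\eta_A = \omega_i\ne 0$. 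A rational map from a smooth projective curve to a projective curve is a morphism, so $\beta_i\circ a_i$ induces a non-constant, hence finite, morphism $\psi_i\colon X_i\to\tilde X$; by construction $\psi_i^*\tilde\omega$ agrees with $\omega_i$ on the dense open set $X_i\setminus|D_i|$, hence everywhere. Finally $\psi_1\circ\phi_1$ and $\psi_2\circ\phi_2$ are both the unique morphism $Y\to\tilde X$ lifting the common rational map $Y\dashrightarrow Z$ coming from $\pi\circ a$, so the square commutes, which completes the proof.

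I expect the genuine technical work to lie in the first paragraph: recalling the precise universal property of generalized Jacobians, checking that $a_1\circ\phi_1$ and $a_2\circ\phi_2$ can both be made to factor through one common $\Jac(Y,\mathfrak m)$, and carrying out the base-point bookkeeping that turns those factorizations into honest homomorphisms $\alpha_i$ with $\alpha_i^*\eta_i = \eta$. By contrast the hyperplane argument of the second paragraph is the conceptual core --- a nonzero invariant $1$-form annihilates at most a codimension-one subspace of $\operatorname{Lie}(J)$, and this is exactly what makes the common quotient $A$ positive-dimensional --- yet it is short once the setup is in place.
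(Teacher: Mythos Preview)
Your proof is correct and shares the paper's core idea --- push the data into generalized Jacobians and use that a nonzero invariant $1$-form cuts out a hyperplane in the Lie algebra, forcing a positive-dimensional common quotient --- but the execution differs in two places worth noting. First, the paper builds its quotient $A$ as the cokernel of $(\alpha_1,-\alpha_2)\colon \Jac(Y,E)\to \Jac(X_1,D_1)\times\Jac(X_2,D_2)$, whereas you build it as $J/(\ker\alpha_1+\ker\alpha_2)$; these are dual constructions of the same object up to isogeny and give the same dimension. Second, and more substantively, the paper splits into cases on $\dim A$: if $\dim A=1$ it takes $\tilde X$ to be the completion of $A$, and if $\dim A>1$ it picks a second pair $(\alpha_1,\alpha_2)\in V$ independent of $(\omega_1,\omega_2)$, writes $\alpha_i=f_i\omega_i$, and observes that $\phi_1^*f_1=\phi_2^*f_2$ is a nonconstant element of $C(X_1)\cap C(X_2)\subset C(Y)$ --- so $\tilde X$ is the curve with this intersection as function field. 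Your route is more uniform: you always take $\tilde X$ to be the normalization of the image curve in $\overline A$, avoiding the case split. The paper's trick has the advantage of producing an explicit rational function witnessing $C(X_1)\cap C(X_2)\neq C$, which is conceptually clean; your version has the advantage of a single construction that works regardless of $\dim A$. One small correction: $J$ and $A$ need not be semi-abelian (generalized Jacobians can have $\GG_a$-factors when the modulus has multiplicities), but nothing in your argument actually uses semi-abelianness, so this is cosmetic.
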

\begin{proof}
As explained in \S\ref{1}, curves with non-zero 1-forms correspond to function fields with a non-zero $C$-linear derivation. In terms of function fields, the theorem says that there is a subfield of $C(Y)$ of transcendence degree 1 over $C$, closed under the derivation, and contained in (the image of) $C(X_1)$ and $C(X_2)$. The obvious choice of this subfield is the intersection $C(X_1) \cap C(X_2)$. This intersection is closed under the derivation, however, it is not clear that this intersection differs from $C$. We will use the theory of generalized Jacobian varieties (see \cite{Serre-AlgebraicGroups}) to show that $C(X_1) \cap C(X_2) \neq C$.

Let $D_1$ and $D_2$ be the divisor of poles of $\omega_1$ and $\omega_2$ respectively. Choose an effective divisor $E$ on $Y$ with $E \geq \phi_i^*(D_i)$ for $i = 1,2$. Then we have a morphism of commutative group varieties
\begin{align*}
     \Phi\colon \Jac(Y, E) &\longrightarrow \Jac(X_1, D_1) \times \Jac(X_2, D_2) \\
 \mbox{divisor class }     [H] &\longmapsto \big(\,[\phi_1(H)], \,[-\phi_2(H)]\,\big)
\end{align*}
(note the minus sign). Define $A = (\Jac(X_1, D_1)\times \Jac(X_2, D_2))/\im(\Phi)$ (this quotient exists as a group variety by \cite[Th\'eor\`eme 4.C.]{Anantharaman}). The invariant 1-forms on $\Jac(Y, E)$ correspond to 1-forms on $Y$ with poles at most $E$, and the invariant 1-forms on $\Jac(X_1, D_1)\times \Jac(X_2, D_2)$ correspond to pairs of 1-forms on $X_1$ and on $X_2$ with poles at most $D_1$ and $D_2$. Under these identifications, pulling back invariant 1-forms via $\Phi$ corresponds to the map
\begin{align*}
    \Phi^*\colon H^0(X_1, \Omega^1(D_1)) \times H^0(X_2, \Omega^1(D_2)) &\longrightarrow H^0(Y, \Omega^1(E)) \\
    (\alpha_1, \alpha_2) &\longmapsto \phi_1^*\alpha_1 - \phi_2^*\alpha_2 
\end{align*} 
(the minus sign here comes from the minus sign in the definition of $\Phi$). \\
The kernel of this map corresponds to invariant 1-forms on $A$. It is given by
\[V := \ker \Phi^* = \{(\alpha_1, \alpha_2) : \phi_1^* \alpha_1 = \phi_2^* \alpha_2\} \]
In particular do we have $(\omega_1, \omega_2) \in V$. Therefore there is an invariant 1-form on $A$ such that $\omega_1$ and $\omega_2$ are obtained from it via pulling back along rational maps $X_i \rationalmap A$. Since $\omega_1$ and $\omega_2$ are non-zero, this already shows that $A \neq 0$, so $\dim A \geq 1$. If $\dim A  = 1$, then we can take $\tilde X$ to be (the completion of) $A$, and $\tilde \omega$ the 1-form on $A$ corresponding to the pair $(\omega_1, \omega_2)$. 

In what follows, we may therefore assume that $\dim A > 1$. Since we know that $\dim_C V = \dim A$, there exists an element $(\alpha_1, \alpha_2) \in V$ that is $C$-linearly independent of $(\omega_1, \omega_2)$. 

There are unique $f_1 \in C(X_1)$ and $f_2 \in C(X_2)$ such that $\alpha_1 = f_1\omega_1$ and $\alpha_2 = f_2\omega_2$. We claim that at least one of the $f_i$ is not in $C$. Suppose otherwise. Then after scaling we can assume $f_1 = 1$, so that $\alpha_1 = \omega_1$. Then 
\[f_2\phi_2^*(\omega_2) = \phi_2^*(\alpha_2) = \phi_1^*(\alpha_1) = \phi_1^*(\omega_1) = \phi_2^*(\omega_2) \]
so we also get $f_2 = 1$ (since $\omega_2 \neq 0$). This contradicts the assumption that $(\alpha_1, \alpha_2)$ is $C$-linearly independent of $(\omega_1, \omega_2)$. Hence at least one of the $f_i$ is not in $C$. 

Finally we use that $\phi_1^*\omega_1 = \phi_2^*\omega_2$ and $\phi_1^*\alpha_1 = \phi_2^*\alpha_2$, which gives us $\phi_1^* f_1  = \phi_2^* f_2$. This is therefore an element of $C(X_1) \cap C(X_2)$ but not in $C$, and the existence of such an element is what we wanted to show.
\end{proof}

\begin{example} (Compare \cite{H-I} Remark 2.17.) Let $(E_i,\omega_i), \ i=1,2$ denote elliptic curves over $C$ with
regular 1-forms. Theorem \ref{theorem 2.1} implies that there is an algebraic relation between non-constant solutions of 
$(E_1,\omega_1)$ and $(E_2,\omega_2)$ if and only if there is an isogeny $\tau\colon  E_1\rightarrow E_2$ such that
$\tau^*\omega_2\in \mathbb{Q}\cdot\omega_1$. 
\end{example}

\begin{proposition}\label{proposition 2.2} Any differential equation $(X,\omega)$ of general type is, in a unique way, the pull back of a new differential equation of general type.
\end{proposition}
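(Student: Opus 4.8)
The plan is to rephrase the statement in terms of differential subfields and then combine \cref{theorem 2.1} with Riemann--Hurwitz. Let $D$ be the derivation on $C(X)$ dual to $\omega$. Giving a pull back $(X',\omega')$ of $(X,\omega)$ (together with the map $X\to X'$) is the same as giving a subfield $K\subseteq C(X)$ with $\trdeg_C K=1$ and $D(K)\subseteq K$: then $K=C(X')$, the inclusion is induced by a finite morphism $m\colon X\to X'$, and $\omega'=\dd f/D(f)$ for $f\in K\setminus C$ is the unique $1$-form with $m^*\omega'=\omega$. Since an exact (resp.\ exponential, resp.\ Weierstrass) $1$-form stays of that type under pull back, any $(X',\omega')$ to which a general type $(X,\omega)$ pulls back is itself of general type; and $(X',\omega')$ is \emph{new} precisely when $K$ admits no proper $D$-stable subfield of transcendence degree $1$. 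Thus the proposition reduces to showing that
\[\mathcal K=\{\,K\subseteq C(X)\;:\;\trdeg_C K=1,\ D(K)\subseteq K\,\}\]
has a smallest element (with respect to inclusion), which is then automatically the function field of the desired new equation. Uniqueness is built in: any new general type pull back $(X'',\omega'')$ has $C(X'')\in\mathcal K$, hence $C(X'')\supseteq\min\mathcal K$; as $\min\mathcal K$ is also stable under the derivation of $C(X'')$, newness of $(X'',\omega'')$ forces $C(X'')=\min\mathcal K$.

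\textbf{Closure under intersection.} First I would show $\mathcal K$ is closed under intersection. Given $K_1,K_2\in\mathcal K$ with associated curves $X_1,X_2$ and $1$-forms $\omega_1,\omega_2$, apply \cref{theorem 2.1} with $Y=X$ and the canonical morphisms $\phi_i\colon X\to X_i$ (which satisfy $\phi_i^*\omega_i=\omega$). It produces a curve $\tilde X$ and finite morphisms $\psi_i\colon X_i\to\tilde X$ with $\psi_1\phi_1=\psi_2\phi_2$; pulling back, the common image $\phi_1^*\psi_1^*(C(\tilde X))=\phi_2^*\psi_2^*(C(\tilde X))$ is a subfield of $C(X)$ of transcendence degree $1$, it is visibly $D$-stable, and it lies in $K_1\cap K_2$. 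Hence $K_1\cap K_2$ has transcendence degree $1$, so $K_1\cap K_2\in\mathcal K$.

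\textbf{Bounded index.} The remaining, most delicate point — and the one I expect to be the main obstacle — is that $[C(X):K]$ is bounded as $K$ ranges over $\mathcal K$. Granting this, an element $K_{\min}\in\mathcal K$ of maximal index satisfies $K_{\min}\cap K\in\mathcal K$ with $[C(X):K_{\min}\cap K]\ge[C(X):K_{\min}]$ for all $K\in\mathcal K$, so $K_{\min}\subseteq K$; thus $K_{\min}$ is the minimum. For the bound, write $d=[C(X):K]$, let $m\colon X\to X'$ be the corresponding morphism, and let $Q_1,\dots,Q_{k'}$ be the poles of $\omega'$, of orders $n_1,\dots,n_{k'}$. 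Every pole of $\omega$ lies over some $Q_j$, and a local computation gives $\deg D_\infty(\omega)=d\sum_j(n_j-1)+k$, where $D_\infty(\omega)$ is the polar divisor of $\omega$ and $k$ its number of poles; discarding the ramification of $m$ away from the $Q_j$, Riemann--Hurwitz gives $2g(X)-2\ge d\bigl(2g(X')-2+k'\bigr)-k$. If $\omega$ is holomorphic, then $(X,\omega)$ not being of Weierstrass type forces $g(X),g(X')\ge2$ and hence $d\le g(X)-1$. If $\omega$ has poles and some $n_j\ge2$, the displayed equality forces $d<\deg D_\infty(\omega)$. If $\omega$ has only simple poles, then $2g(X')-2+k'\ge1$ — the sole exception being $g(X')=0$, $k'=2$, in which case (after a Möbius change of coordinate) $\omega'=\rho\,\dd s/s$, so $\omega=\rho\,\dd\sigma/\sigma$ is of exponential type, contradicting general type — and the Riemann--Hurwitz inequality then yields $d\le 2g(X)-2+k\le 2g(X)-2+\deg D_\infty(\omega)$. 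In every case $d$ is bounded in terms of $g(X)$ and $\deg D_\infty(\omega)$, which completes the argument.
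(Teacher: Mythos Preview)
Your argument is correct but more elaborate than the paper's. For the degree bound, the paper observes in one line that a general type $\omega'$ must have at least one zero (otherwise $\deg\div\omega'\le 0$ forces $g(X')\le 1$ and hence one of the exact/exponential/Weierstrass cases), and then a local computation shows that $\omega=m^*\omega'$ acquires at least $\deg m$ zeros counted with multiplicity; thus $\deg m$ is bounded by the degree of the zero divisor of $\omega$. This single observation replaces your entire three-case Riemann--Hurwitz analysis on the polar divisor. The paper also phrases existence as ``no infinite chain of pull backs'' rather than ``$\mathcal K$ has a minimum'', and handles uniqueness by a direct appeal to \cref{theorem 2.1} together with the triviality of $\Aut(X',\omega')$ for new general type $(X',\omega')$. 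Your lattice-of-subfields formulation is tidy---closure of $\mathcal K$ under intersection (via \cref{theorem 2.1}) plus bounded index yields a minimum, and uniqueness is then automatic without separately invoking the automorphism fact---but the gain is purely organizational; the paper's zero-counting bound is the real shortcut you missed.
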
 
\begin{proof}
By definition, the classes of exact, exponential type and Weierstrass type 1-forms are closed under pull-back. Hence, if $\omega$ is the pull-back of some $\tilde \omega$, then $\tilde \omega$ must be of general type as well. Hence, it suffices to show that $\omega$ is the pull-back of some new 1-form.

To prove the existence of the pull back $(X,\omega)\rightarrow (X',\omega')$ with a new $(X',\omega')$ we have to show that an infinite chain of pull backs is not possible. For this, let $\phi\colon  X \to X'$ be a morphism and $\omega'$ a 1-form on $X'$ such that $\omega = \phi^*\omega'$. Then $\omega'$ has at least 1 zero, since it is of general type. A local computation shows that $\omega$ then has at least $\deg (\phi)$ zeros (counted with multiplicity). Therefore, $\deg (\phi)$ is bounded by the degree of the divisor of zeroes of $\omega$. Hence infinite chains are not possible. 

The uniqueness follows from
Theorem \ref{theorem 2.1}, together with the fact that $(X', \omega')$ has trivial automorphism group (see the reasoning below Theorem~2.1b). 
\end{proof}
 
 For $\omega$ not of general type, infinite chains of pull backs may occur. There are essentially two cases:\\ 
 (i) $(\mathbb{P}^1,\frac{\dd x}{x})$ because $\frac{\dd x}{x}=\frac{1}{n}\frac{\dd x^n}{x^n}$  and\\
 (ii) $(E,\omega)$ with an elliptic curve $E$ and $\omega$ a regular 1-form. Indeed, one has
  $\omega = [n]^*(\frac{1}{n}\omega)$ where $[n]\colon  E \to E$ is the multiplication-by-$n$ map. 

\section{Odds and ends}
\subsection{The derivation \texorpdfstring{$\tilde{D}$}{Dtilde} on \texorpdfstring{$\Omega^1 _{L/K}$}{Omega1\_L/K}}
 The following result is used in Section~2 and also in \cite[p.~530]{R1}. For 
 convenience and lack of reference we present a short proof.
 
\begin{proposition}\label{3.1}
 Let $L\supset K$ be an extension of differential fields of characteristic zero. Write $D$ for the derivation on $L$.
 There exists an additive map $\tilde{D}\colon \Omega^1 _{L/K}\rightarrow \Omega^1_{L/K}$ satisfying 
 $\tilde{D}(a\cdot \dd b)=D(a)\cdot \dd b+a\cdot \dd D(b)$ for all $a,b\in L$.  
 \end{proposition}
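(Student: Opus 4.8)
The plan is to construct $\tilde D$ in two stages: first as a map on the module of Kähler differentials of $L$ over $\mathbb{Q}$ (or over any subfield of constants contained in $K$ on which $D$ vanishes — but over $\mathbb{Q}$ is cleanest since $\operatorname{char} = 0$), then to check it descends to the relative differentials $\Omega^1_{L/K}$. Concretely, write every element of $\Omega^1_{L/\mathbb{Q}}$ as a finite sum $\sum_k a_k\,\dd b_k$ and set $\tilde D\big(\sum_k a_k\,\dd b_k\big) = \sum_k\big(D(a_k)\,\dd b_k + a_k\,\dd D(b_k)\big)$. The only thing to verify at this stage is that this is well-defined, i.e.\ independent of the chosen representation.

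For well-definedness I would invoke the universal property of Kähler differentials. The module $\Omega^1_{L/\mathbb{Q}}$ is the target of the universal $\mathbb{Q}$-derivation $\dd\colon L \to \Omega^1_{L/\mathbb{Q}}$, so it suffices to produce, for each fixed representation, a map that is visibly a function of the element. The standard trick: consider the $L$-module $M = L \oplus \Omega^1_{L/\mathbb{Q}}$ and define $\delta\colon L \to M$ by $\delta(a) = (D(a), \dd a)$. One checks $\delta$ is a $\mathbb{Q}$-derivation (additivity is clear; for the Leibniz rule, $\delta(ab) = (D(ab), \dd(ab)) = (aD(b)+bD(a),\, a\,\dd b + b\,\dd a) = a\cdot(D(b),\dd b) + b\cdot(D(a),\dd a) = a\,\delta(b) + b\,\delta(a)$ using the natural $L$-module structure on $M$). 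By the universal property there is a unique $L$-linear map $\Omega^1_{L/\mathbb{Q}} \to M$ sending $\dd a \mapsto \delta(a)$; composing with the projection $M \to \Omega^1_{L/\mathbb{Q}}$ gives a well-defined additive (indeed $\mathbb{Q}$-linear) map $\tilde D$ satisfying $\tilde D(\dd a) = \dd D(a)$, and then $\tilde D(a\,\dd b) = \tilde D(a\cdot \dd b)$; but note $\tilde D$ as just built is $L$-linear, not what we want. So instead I take $\tilde D := (\text{projection})\circ(\text{the }L\text{-linear map above})$ is wrong; the correct formulation is: the map $a \mapsto \delta(a)$ extends the derivation, and the desired $\tilde D$ is recovered as $\tilde D(\eta) = $ second component of the image of $\eta$ under the unique $L$-linear extension of $\delta$ composed with... — to avoid this circularity cleanly, I would simply define $\tilde D$ directly by the formula and prove well-definedness by showing that the relations defining $\Omega^1_{L/\mathbb{Q}}$ (additivity $\dd(a+b) = \dd a + \dd b$, Leibniz $\dd(ab) = a\,\dd b + b\,\dd a$, and $\dd c = 0$ for $c \in \mathbb{Q}$) are each sent to zero; e.g.\ for the Leibniz relation, $\tilde D(\dd(ab)) - \tilde D(a\,\dd b) - \tilde D(b\,\dd a) = \dd D(ab) - D(a)\dd b - a\,\dd D(b) - D(b)\dd a - b\,\dd D(a)$, which vanishes because $D(ab) = aD(b) + bD(a)$ and $\dd$ is a derivation. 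This is the routine but genuinely load-bearing computation.

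Having $\tilde D$ on $\Omega^1_{L/\mathbb{Q}}$, I pass to $\Omega^1_{L/K}$ via the exact sequence $\Omega^1_{K/\mathbb{Q}}\otimes_K L \to \Omega^1_{L/\mathbb{Q}} \to \Omega^1_{L/K} \to 0$. It suffices to show $\tilde D$ preserves the image $N$ of the first map, i.e.\ the $L$-submodule generated by $\{\dd c : c \in K\}$. But for $c \in K$ we have $D(c) \in K$ since $L \supset K$ is an extension of \emph{differential} fields, so $\tilde D(\dd c) = \dd D(c) \in N$; and $\tilde D(a\,\dd c) = D(a)\,\dd c + a\,\dd D(c) \in N$ as well. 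Hence $\tilde D(N) \subset N$ and $\tilde D$ induces a well-defined additive map on the quotient $\Omega^1_{L/K} = \Omega^1_{L/\mathbb{Q}}/N$, and by construction it satisfies the required identity $\tilde D(a\,\dd b) = D(a)\,\dd b + a\,\dd D(b)$ for all $a, b \in L$.

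The main obstacle is purely the well-definedness verification on $\Omega^1_{L/\mathbb{Q}}$: one must be careful that the naive formula does not depend on how an element is written as a sum of simple tensors, and the clean way to see this is either the universal-property packaging sketched above or a direct check that the three families of defining relations of the Kähler module map to zero under the formula. Everything after that — compatibility with $K$, additivity, the Leibniz-type identity — is immediate.
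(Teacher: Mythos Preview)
Your argument is correct and takes a genuinely different route from the paper's. The paper fixes a transcendence basis $\{x_i\}$ of $L/K$; since $\{\dd x_i\}$ is then an $L$-basis of $\Omega^1_{L/K}$, one can \emph{define} $\tilde D$ directly by the formula on these basis elements, so well-definedness is free. The work is instead to verify the identity $\tilde D(a\,\dd b)=D(a)\,\dd b+a\,\dd D(b)$ for \emph{arbitrary} $b$: the paper shows the error $\ell(b):=\tilde D(\dd b)-\dd D(b)$ is a $K$-derivation $L\to\Omega^1_{L/K}$ vanishing on each $x_i$, and composing with any $L$-linear map $\Omega^1_{L/K}\to L$ gives a $K$-derivation of $L$ vanishing on a transcendence basis, hence zero. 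Your approach inverts the division of labour: all the effort goes into well-definedness (checking the formula respects the presentation of $\Omega^1_{L/\mathbb{Q}}$), after which the identity holds by construction, and then you descend to $\Omega^1_{L/K}$ via $D(K)\subset K$ and the standard exact sequence. Your route is basis-free and would work verbatim for ring extensions; the paper's is shorter once one spots the derivation trick and stays entirely inside $\Omega^1_{L/K}$. One small point worth making explicit in your write-up: since $\tilde D$ is not $L$-linear, checking that the \emph{generating} relations map to zero is not literally sufficient---you should record the twisted Leibniz rule $\tilde D(\ell\cdot\eta)=D(\ell)\,\eta+\ell\,\tilde D(\eta)$ (which you already invoke in the descent step), so that preservation of the generators propagates to the whole relation submodule.
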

 \begin{proof} Let $\{x_i\}_{i\in I}$ be a transcendence basis of $L/K$. Then $\{\dd x_i\}$ is a basis of the $L$-vector space
 $\Omega^1_{L/K}$. Define $\tilde{D}\colon \Omega^1_{L/K}\rightarrow \Omega^1_{L/K}$ by the formula 
 $\tilde{D}(\sum a_i\cdot \dd x_i)=\sum (D(a_i)\cdot \dd x_i+a_i\cdot \dd D(x_i))$. We want to show that for all $a,b\in L$
 the expression $E(a,b)=\tilde{D}(a\cdot \dd b)-D(a)\cdot \dd b-a\cdot \dd D(b)$ is zero.\\
 
 The derivatives $\frac{\partial}{\partial x_i}$ on $L/K$ are defined by the formula $\dd f=\sum _{i}\frac{\partial f}{\partial x_i}\cdot \dd x_i$.
{\it  The first observation is $E(a,b)=aE(1,b)$}. Indeed, write $a\dd b=\sum _i a\frac{\partial b}{\partial x_i}\cdot \dd x_i$. Then 
\[ E(a,b)=\sum_i  \{D(a\frac{\partial b}{\partial x_i})\cdot \dd x_i+a\frac{\partial b}{\partial x_i}\cdot \dd Dx_i\}
-D(a)\sum _i\frac{\partial b}{\partial x_i}\cdot \dd x_i-a\cdot \dd D(b)\]
\[\mbox{is equal to } 
 a\{\sum _iD(\frac{\partial b}{\partial x_i})\cdot \dd x_i +\frac{\partial b}{\partial x_i}\cdot \dd Dx_i)-\dd D(b)\}=a\{\tilde{D}(\dd b)-\dd D(b)\}.\]
 
  The map $\ell \colon L\rightarrow \Omega^1_{L/K}$, given by $\ell (b)=E(1,b)=\tilde{D}(\dd b)-\dd Db$, can be seen to have the properties:\\
  $\ell$ is additive; $\ell (bc)=b\ell(c)+c\ell (b)$; $\ell$ is zero on $K$ and therefore $K$-linear.
 
   Choose any $L$-linear map $m\colon\Omega^1 _{L/K}\rightarrow L$. Then $m\circ \ell \colon L\rightarrow L$ is a $K$-linear derivation
   on $L$ and is zero on the transcendence basis $\{x_i\}$. Thus $m\circ  \ell=0$ for every $m$. Hence $\ell =0$. \end{proof}

We note that the action of $\tilde{D}$ on $\Omega^1_{L/K}$ induces a natural action on 
$\Hom_{L}(\Omega^1_{L/K},L)=\operatorname{Der}(L/K)$, the $K$-vector space of all $K$-linear derivations $E$ on $L$. This action is simply
$E\mapsto [E,D]$.

 \subsection{Base change}
 
\begin{proposition}\label{3.2}
 Let $X$ be a curve over $C$ and $\omega\in \Omega_{C(X)/C}$ a non-zero rational 1-form on $X$.
 Let $C \subset K$ be an arbitrary field extension. Write $X_K:= K\times _CX $ and let $\omega_K$ denote $\omega$ regarded as 1-form on $X_K$. Then
 \begin{enumerate}
    \item $\omega$ is exact $\Leftrightarrow$ $\omega_K$ is exact.
    \item $\omega$ is of exponential type $\Leftrightarrow$ $\omega_K$ is of exponential type.
    \item $\omega$ is of Weierstrass type $\Leftrightarrow$ $\omega_K $ is of Weierstrass type.
    \item $\omega$ is of general type $\Leftrightarrow$ $\omega_K$ is of general type. 
    \item $\omega$ is old $\Leftrightarrow$ $\omega_K$ is old. 
\end{enumerate}
\end{proposition}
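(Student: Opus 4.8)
The plan is to treat the two directions of each equivalence separately. \textbf{The implications ``$\Rightarrow$''} are essentially formal: in each case the data witnessing the type of $\omega$ already lives over $C$ and survives the base change $C\to K$ verbatim -- a function $f\in C(X)$ with $\omega=\dd f$, a pair $(f,c)\in C(X)^*\times C^*$ with $\omega=\dd f/(cf)$, Weierstrass data $(f,g,a,b)$ with $g^2=f^3+af+b$ and $4a^3+27b^2\neq 0$, or a proper covering $\phi\colon X\to X'$ with $\omega=\phi^*\omega'$. For the last of these one only needs that $\deg(\phi_K)=\deg(\phi)$, which holds because $C(X)/\phi^*C(X')$ is finite separable and $C(X')$ is a regular extension of $C$, so this degree is unchanged upon tensoring with $K$ and passing to fraction fields. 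Part (4) requires no separate argument in either direction: ``general type'' is by definition the negation of the disjunction of (1)--(3), so it follows formally once (1), (2), (3) are established in both directions.

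\textbf{The implications ``$\Leftarrow$''} I would prove by specialization. First reduce to the case that $K$ is finitely generated over $C$: the (finite) data witnessing the type of $\omega_K$ is defined over some finitely generated subfield $K_0\subset K$, and since the natural map $\Omega^1_{K_0(X_{K_0})/K_0}\to \Omega^1_{K(X_K)/K}$ is injective and compatible with $\dd$ and with pullback, $\omega_{K_0}$ has the same type over $K_0$; a descent from $K_0$ to $C$ then suffices. So write $K=C(V)$ with $V$ a smooth affine integral $C$-variety (replace $V$ by a dense open to arrange smoothness); since $C$ is algebraically closed, $V(C)$ is Zariski dense in $V$.

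Next I would spread the witnessing data out over a dense open $U\subset V$. In cases (1)--(3) the functions involved extend to rational functions on $X\times_C U$, the constants extend to rational functions on $U$, and the defining identities hold as identities of rational relative $1$-forms over $U$ after shrinking $U$. In case (5) one spreads $Z/K$ to a smooth projective curve $\mathcal Z\to U$ with geometrically connected fibres, the covering $\psi$ to a finite flat $U$-morphism $\Psi\colon X\times_C U\to\mathcal Z$ of the same degree, and $\eta$ to a rational relative $1$-form $\tilde\eta$ with $\Psi^*\tilde\eta=\omega|_{X\times_C U}$. Now choose $v\in U(C)$ avoiding the finitely many ``bad'' proper closed subsets: where one of the relevant functions acquires a pole (or, for the exponential case, a zero) along the whole fibre $\{v\}\times X$, where $c$ or $4a^3+27b^2$ vanishes, or where the spread-out data of case (5) degenerates. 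Such a $v$ exists by density of $V(C)$. Restricting everything to the fibre over $v$ recovers $X$ and $\omega$ -- pullback and restriction commute with base change -- and the specialized data exhibits $\omega$ as being of the same type over $C$.

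\textbf{The main obstacle} is the bookkeeping in the spreading-out step: one must check that after shrinking $U$ the specialized objects really do have ``the right shape'' -- in case (5) that the fibre $\mathcal Z_v$ is again a smooth projective geometrically connected curve and that $\deg\Psi_v=\deg\psi\geq 2$ (this uses finite flatness of $\Psi$ over $U$ together with openness of the smooth--projective--geometrically-connected locus), and in cases (1)--(3) that the specialized functions are not identically $0$ or $\infty$ (which is forced anyway since $\omega\neq 0$) and that the specialized Weierstrass data still satisfies $4a_v^3+27b_v^2\neq 0$. None of this is deep, but it is where the argument has content; everything else is routine.
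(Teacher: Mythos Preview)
Your proposal is correct and follows essentially the same approach as the paper: the paper also observes that (4) is formal and that ``$\Rightarrow$'' is clear, and for ``$\Leftarrow$'' it spreads the witnessing data out over a finitely generated $C$-subalgebra $R\subset K$, inverts an element to arrange smoothness, and specializes at a maximal ideal (equivalently, a $C$-point of $\spec R$) using the Nullstellensatz. Your version is phrased more geometrically (a variety $V$ and a point $v\in V(C)$ rather than an algebra $R$ and a maximal ideal $\mathfrak m$) and is more explicit about the bookkeeping, but the argument is the same.
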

\begin{proof}
Item (4) follows directly from (1), (2) and (3). For the cases (1), (2), (3) and (5) the implication $\Rightarrow$ is clear. The proofs of the implication $\Leftarrow$ for (1), (2), (3) and (5) are very similar. We give a proof for item~(5). 

Let $\tilde X$ be a curve over $K$, let $\phi\colon X_K \to \tilde X$ be a morphism of degree at least 2, and let $\tilde \omega$ be a meromorphic 1-form on $\tilde X$ such that $\phi^*\tilde \omega = \omega_K$. Let $R$ be the $C$-subalgebra of $K$ generated by the coefficients of some finite set of equations for $\tilde X_K$, $\phi$ and $\tilde \omega$. Then by construction, $\tilde X$, $\phi$ and $\tilde \omega$ are defined over $R$. After replacing $R$ by $R[1/f]$ for some non-zero $f$ in the Jacobian ideal of $\tilde X$ over $R$, we may even assume that $\tilde X$ is smooth over $R$. Now take a maximal ideal $\mathfrak m$ of $R$. Then we have $C = R/\mathfrak m$ since $R$ is finitely generated over $C$. Let $\tilde X_C$, $\phi_C$ and $\tilde \omega_C$ be the base changes of $\tilde X$, $\phi$ and $\tilde \omega$ via this map $R \to C$. Then $\tilde X_C$ is a smooth curve over $C$, $\tilde \omega_C$ is a 1-form on $X_C$, and $(\phi_C)^*\tilde \omega_C = \omega$. Moreover, the degree of $\phi_C$ is the same as the degree of $\phi$, because of flatness of $\tilde X$ over $R$. In particular, $\deg \phi > 1$, and so $\omega$ is old. 
\end{proof}

Let $X_1$ and $X_2$ be curves over $C$. Consider an isomorphism $\phi\colon  X_{1,F} \to X_{2,F}$ defined over a field extension $F/C$. We need a result on descending such an isomorphism to $C$. Note that this is not automatic, even in the case that $\phi$ is an automorphism of finite order. As an example, take $X_1 = X_2 = E$ an elliptic curve, and fix a point $P_0 \in E(F) \setminus E(C)$. Then the automorphism $E \to E$ given by $Q \mapsto P_0 - Q$ has order 2, and does not descend to an automorphism of $E$ over $C$. Similar examples exist on $\PP^1$.

\begin{proposition}\label{3.3}
Let $X_1$ and $X_2$ be curves over $C$, $S_i \oldsubset X_i(C)$ finite (possibly empty) subsets of $C$-points of $X_i$ for $i = 1,2$, such that 
\[ \# S_i \geq \begin{cases} 3 & \textrm{if } \genus(X_i) = 0, \\
                           1 & \textrm{if } \genus(X_i) = 1, \\
                           0 & \textrm{if } \genus(X_i) \geq 2. \end{cases} \]
Let $F/C$ be a field extension, and let $\phi\colon X_{1,F} \isomorphism X_{2,F}$ be an isomorphism of curves over $F$ such that $\phi(S_1) = S_2$. Then $\phi$ is defined over $C$, i.e. there is an isomorphism $\tilde \phi\colon X_1 \isomorphism X_2$ such that $\phi$ is the base change of $\tilde \phi$ to $F$.
\end{proposition}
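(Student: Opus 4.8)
The key point is that the numerical conditions on the $S_i$ are exactly the ones forcing the pointed curves $(X_i,S_i)$ to be rigid (finite automorphism group), and, since $F/C$ need not be algebraic let alone Galois, the descent is carried out not by Galois theory but by the following principle: over the algebraically closed field $C$, a finite $C$-scheme possessing an $F$-point automatically possesses a $C$-point which base-changes to it. Indeed, if $A$ is a finite $C$-algebra then $A$ is a finite product of Artinian local $C$-algebras, each with residue field $C$ (as $C=\bar C$) and nilpotent maximal ideal; any $F$-algebra homomorphism $A\tensor_C F\to F$ kills the nilpotent ideal $\mathrm{nil}(A)\tensor_C F$ because $F$ is reduced, so it factors through $(A/\mathrm{nil}(A))\tensor_C F\cong F^{\oplus r}$ and hence is the base change of a homomorphism $A\to C$. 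So in each genus range it suffices to exhibit a finite $C$-scheme $Z$ parametrising the relevant structure-preserving isomorphisms, together with the $F$-point corresponding to $\phi$; the resulting $C$-point is the desired $\tilde\phi$. Conceptually $Z=\operatorname{Isom}_C\big((X_1,S_1),(X_2,S_2)\big)$ and its finiteness over $C$ is the statement that smooth stable pointed curves have finite, unramified Isom schemes, but I would verify it by hand below.

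Genus $0$: after identifying $X_1\cong X_2\cong\PP^1$ over $C$, $\phi$ lies in $\mathrm{PGL}_2(F)$ and, being injective with $\phi(S_1)=S_2$, restricts to a bijection between the finite sets $S_1,S_2$ of $C$-points. Choosing three distinct $p_1,p_2,p_3\in S_1$ with images $q_1,q_2,q_3\in S_2\subset\PP^1(C)$, the M\"obius transformation sending $p_k$ to $q_k$ is unique and has matrix with entries in $C(p_1,p_2,p_3,q_1,q_2,q_3)=C$; it must equal $\phi$. Genus $1$: fixing $s_1\in S_1$, the point $s_2:=\phi(s_1)$ lies in $S_2\subset X_2(C)$, so $\phi$ is an isomorphism of elliptic curves $(X_1,s_1)\to(X_2,s_2)$ over $F$; writing short Weierstrass models $y^2=x^3+a_ix+b_i$ over $C$, $\phi$ has the form $(x,y)\mapsto(u^2x,u^3y)$ with $u^4=a_1/a_2$ if $a_2\neq 0$, or $u^6=b_1/b_2$ if $b_2\neq 0$, forcing $u$ algebraic over $C$ and hence $u\in C$. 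In both cases $\phi$ is defined over $C$; equivalently, the relevant $Z$ is a finite $C$-scheme.

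Genus $\geq 2$: embed $X_i\hookrightarrow\PP^N$, $N=5\genus(X_i)-6$ (the genera of $X_1$ and $X_2$ agree), by the complete tricanonical system $\lvert(\Omega^1_{X_i})^{\tensor 3}\rvert$; this is a closed embedding defined over $C$ with nondegenerate image, and every automorphism of $X_i$ over any extension field is induced by a unique element of $\mathrm{PGL}_{N+1}$. Since an isomorphism of smooth curves identifies canonical sheaves, $\phi$ extends uniquely to $g\in\mathrm{PGL}_{N+1}(F)$ with $g(X_{1,F})=X_{2,F}$. The locus $Z=\{g: g^*I(X_2)=I(X_1)\}$, a closed subscheme of $\mathrm{PGL}_{N+1,C}$, is affine and of finite type over $C$, and nonempty since $Z_F\ni g$. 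Its closed points are $C$-rational and form a torsor under $\Aut(X_1)\subset\mathrm{PGL}_{N+1}(C)$, which is finite by Hurwitz; a nonempty, finite-type, affine $C$-scheme with finitely many closed points has dimension $0$ and is therefore finite over $C$. Applying the principle above to the $F$-point $g\in Z(F)$ produces $\tilde\phi$.

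I expect the genus $\geq 2$ case to be the main obstacle: one must set up the tricanonical picture carefully enough to know that $\phi$ determines a \emph{unique} $g\in\mathrm{PGL}_{N+1}$ (uniqueness from nondegeneracy of the embedding), that $Z$ is genuinely defined over $C$ (its defining equations have $C$-coefficients precisely because $X_1$ and $X_2$ do), and that $Z$ is finite---not merely quasi-finite---over $C$, the essential input for the latter being the finiteness of $\Aut(X_1)$. The genus $0$ and $1$ cases are then straightforward explicit computations, and the passage from an $F$-point to a $C$-point is the elementary commutative-algebra remark made at the outset.
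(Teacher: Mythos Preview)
Your proof is correct and shares the paper's core idea: the relevant scheme of structure-preserving isomorphisms is a \emph{finite} $C$-scheme, and over an algebraically closed $C$ every $F$-point of a finite $C$-scheme is the base change of a unique $C$-point. The paper, however, executes this uniformly rather than case-by-case: it invokes the representability of $\operatorname{Isom}(X_1,X_2)$ (citing Grothendieck's Hilbert-scheme exposé), passes to the closed subscheme $I_\tau$ of isomorphisms inducing a fixed bijection $\tau\colon S_1\to S_2$, observes that the size hypotheses on the $S_i$ force $I_\tau(C)$ to be finite, and concludes that $I_\tau$ itself is a finite scheme so that $I_\tau(C)\to I_\tau(F)$ is a bijection. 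Your approach trades this appeal to FGA for explicit constructions---Möbius transformations in genus $0$, Weierstrass coordinates in genus $1$, and the tricanonical embedding in genus $\geq 2$---which makes the argument self-contained but longer. One small wording point: in the genus $\geq 2$ case you say the closed points of $Z$ ``form a torsor under $\Aut(X_1)$'', but a priori $Z(C)$ might be empty; it is cleaner to say $Z(C)$ is either empty or a torsor, hence finite in either case, and then nonemptiness follows \emph{a posteriori} from the $F$-point via your opening principle.
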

\begin{proof}
Let $I = \mathrm{Isom}(X_1, X_2)$ be the isomorphism scheme from $X_1$ to $X_2$; it is a scheme, locally of finite type over $C$, characterized by the property that for any $C$-algebra $R$ we have $I(R) = \mathrm{Isom}_R(X_{1,R}, X_{2,R})$ (see e.g. \cite[4c]{Grothendieck-Hilbert}). The isomorphism $\phi$ of $X$ restricts by assumption to a bijection $S_1 \to S_2$. Call this bijection $\tau$. Let $I_\tau \subset I$ be the closed subscheme of isomorphisms from $X_1$ to $X_2$ restricting to this same bijection $S_1 \to S_2$, i.e. the closed subscheme of $I$ cut out by the image of the equations $\phi_{univ}(s_I) = \tau(s_I)$ for $s \in S_1$, where $\phi_{univ}$ denotes the universal isomorphism from $X_{1,I} := X_1 \times_C I$ to $X_{2,I} = X_2 \times_C I$, and $s_I$ the $I$-point of $X_{1,I}$ corresponding to $s \in X_1(C)$. (In the case where $S_1 = S_2 = \emptyset$ we have $I_\tau = I$.) Note that by construction, $\phi$ corresponds to an $F$-point of $I_\tau$. Hence, it suffices to prove that the natural map $I_\tau(C) \to I_\tau(F)$ is a bijection. 

Our assumption on the size of $S_1$ and $S_2$ implies that $I_\tau(C)$ is finite. Since $I_\tau$ is locally of finite type over $C$, and $C$ is algebraically closed, we conclude that $I_\tau$ is a finite scheme (since a transcendental extension of $C$ is not of finite type over $C$, all closed points of $I_\tau$ come from $C$ points, and so $I_\tau$ has only finitely many closed points, each with residue field $C$), and so we find that the map $I_\tau(C) \to I_\tau(F)$ is a bijection. 
\end{proof}

\section{Existence of new forms}

For a given curve $X$ and effective divisor $D$ one wants to find the condition implying the existence
of a new form $\omega$ with $\div(\omega)\geq -D$.

\subsection{New forms on \texorpdfstring{$\mathbb{P}^1$}{P1}}
  One associates to $\omega$ the ``divisor'' $R$ (with coefficients in $C$) given by $R:=\sum _{p\in \mathbb{P}^1}\Res_p(\omega) [p]$.  Let $A\subset C$ denote  the group generated by the residues of $\omega$. Choose a basis $a_1,\dots ,a_n$ of $A$. Then one can write
 $R=a_1R_1+\dots +a_nR_n$, where each $R_i$ is a divisor with coefficients in $\mathbb{Z}$. We note that the g.c.d. of the coefficients
 of each $R_i$ is 1.  Since the sum of all residues is zero
 and the $a_1,\dots ,a_n$ are independent, each $R_i$ is a divisor of degree zero and thus equal to the divisor of some $u_i\in C(x)^*$.
 
 Now $\omega -\sum a_i\frac{\dd u_i}{u_i}$ has residues zero and is therefore exact, i.e., equal to $\dd v$. Thus we have
 \[\omega =\sum_{i=1}^n a_i\frac{\dd u_i}{u_i}+\dd v\mbox{ with } n\geq 0,\  u_i\in C(x)^*,\ v\in C(x) .\] 
  After the choice of the basis $a_1,\dots ,a_n$, the terms $u_i$ are unique up to a multiplicative constants and $v$ is unique up to
  an additive constant. A base change of $A$ by a matrix in ${\rm GL}(n,\mathbb{Z})$ induces a ``multiplicative'' change of the
  $\{u_i\}$ by the same matrix. We deduce that the subfield $C(u_1,\dots ,u_n,v)$ of $C(x)$ depends only on 
  $\omega$ and $C(x)$. Moreover, if $\omega$ is new, then one immediately sees that $C(u_1,\dots ,u_n,v)=C(x)$.\\
 

 However, if $C(u_1,\dots ,u_n,v)=C(x)$, then $\omega$ need not be new. Indeed, there are many examples of the type:\\
  \[ \frac{\dd x}{x}+ \sqrt{2}\cdot \frac{\dd (x^6-1)}{x^6-1}+\dd (x^3+x^{-6}) =\frac{1}{3}\frac{\dd x^3}{x^3}+\sqrt{2}\cdot \frac{\dd (x^6-1)}{x^6-1}+\dd (x^3+x^{-6}).\] 
 It seems difficult to give an explicit criterion for deciding whether a given $\omega$ is new. Consider, for example,  
 $u_1,u_2\in C(x)\setminus C$ and $\omega =\frac{\dd u_1}{u_1}+\sqrt{2}\cdot \frac{\dd u_2}{u_2}$. This $\omega$ is not
 new if there are integers $n_1,n_2\geq 1$ such that $C(u_1^{n_1},u_2^{n_2})$ is a proper subfield of $C(x)$.  
 
\subsection{New forms on an elliptic curve \texorpdfstring{$E$}{E}}
 Let $E$ have the affine equation $y^2=x^3+ax+b$ and let $\infty$ denote the point at infinity. For a positive divisor
 $D$ on $E$ of degree $\geq 2$ we give examples for the existence of new forms in the vector space
 $V(D):=\{\omega \ | \ \div(\omega)\geq -D\}$. \\  
 (i) $D=2[\infty ]$.  If $\omega \in V(D)$ is not regular, then $\omega$ is new.\\
 (ii) $D=[P]+[\infty]$. Suppose $\exists f\in C(E)$ with $\div(f)= m\cdot ([P]- [\infty])$ and
$m>1$ (i.e. $P$ has finite order in the group $E(C)$). Then $\omega \in V(D)$ is new if $\omega$ is not regular and not a multiple of $\frac{\dd f}{f}$.\\
 (iii) $D=[P]+[\infty]$. Suppose that the divisor class of $[P]-[\infty]$ has infinite order. Then $\omega\in V(D)$ is new
 if $\omega$ is not regular.\\
 (iv) $V(m[\infty])$ is the direct sum of $\{ \dd f \ |\ f\in C[x,y], \deg _\infty f\geq -m+1\}$ and $V(2[\infty])$.
 Now $\omega \in V(m[\infty ])$ is new if $\omega$ is not regular and not exact.\\ 
 (v) The case of $D=[P_1]+\cdots +[P_d]$ ($d \geq 3$ distinct points) is more complicated. For example, let $E$ be given by the equation $y^2 = (x - a_1)(x-a_2)(x - a_3)$, and take 
 \[ \omega = \frac{\dd x}{x - a_1} + \sqrt{2}\frac{\dd x}{x - a_2}. \]
 Then $\omega$ has three simple poles, namely at the points $(a_1, 0)$, $(a_2, 0)$ and $\infty$, and the residues of these poles are as independent over $\QQ$ as they can be (they sum to zero, but there are no other relations), but nevertheless $\omega$ comes from $\PP^1$.



\subsection{Existence of new forms on higher genus curves}

Let $X$ be a curve of genus $\geq 2$, and let $D \geq 0$ be a divisor on $X$. We write 
\[V(D)=\{\omega \ | \ \div(\omega)\geq -D\},\]
and let $n = \dim V(D) - 1$. The goal of this subsection is to show that a ``generic'' element of $V(D)$ is a new form. More precisely, we have the following result. 

\begin{theorem}
Let $S \subset \PP(V(D)) = \PP^n(C)$ be the image of the collection of old forms in $V(D)$. Then $S$ is contained in a union of finitely many proper projective subspaces and a set of the form $\PP^n(\tilde C)$ for $\tilde C \subset C$ a finitely generated field. 
\end{theorem}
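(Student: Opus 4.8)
The plan is to analyze the locus of old forms $\omega \in V(D)$ by stratifying according to the curve $X'$ and morphism $\phi\colon X \to X'$ through which $\omega$ factors. Fix a degree bound: if $\omega = \phi^*\omega'$ with $\deg\phi \geq 2$, then as in the proof of \cref{proposition 2.2}, the number of zeros of $\omega'$ is at least $1$ and hence $\deg\phi$ is bounded by $\deg(\div_0(\omega)) \leq \deg D + (2\genus(X)-2)$ (or one simply notes $\omega$ has finitely many zeros). So only finitely many degrees $d$ occur, and for each the source of the pull-back has genus $g'$ with $2g'-2 \leq (2\genus(X)-2)/d$, hence bounded genus as well. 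The first step is therefore to reduce to a single pair $(d, g')$ and show the corresponding piece of $S$ is contained in finitely many proper subspaces plus one $\PP^n(\tilde C)$.

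Next I would parametrize. Morphisms $X \to X'$ of degree $d$ onto a curve of genus $g'$ are parametrized by a scheme of finite type over $C$ (a Hilbert-scheme-type construction: a degree-$d$ map corresponds to a subfield $C(X') \subset C(X)$, equivalently to the graph as a curve in $X \times X'$ with $X'$ varying in its moduli/Hilbert scheme, all of finite type). Call this parameter scheme $T$; it has finitely many irreducible components, each of finite type over $C$. Over $T$ there is a family of forms: given $t \in T$ corresponding to $\phi_t\colon X \to X'_t$, the space $V(D_t')$ of forms on $X'_t$ with appropriate pole bound pulls back into $V(D)$, and the union of the images $\PP(\phi_t^* V(D_t')) \subset \PP^n$ over all $t$ in one component $T_0$ is the image of an irreducible constructible set $W_0 \subset T_0 \times \PP^n$ under projection to $\PP^n$. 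Thus $S$ is a finite union of constructible sets $\overline{W_i} \subset \PP^n$, each irreducible, each the closure of the image of an irreducible variety.

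The key dichotomy: for each such irreducible constructible piece $\Sigma = \overline{\pi(W_0)}$, either $\Sigma = \PP^n$ or $\Sigma$ is contained in a proper subspace. If $\Sigma \neq \PP^n$, its Zariski closure is a proper closed subvariety; but I need a \emph{linear} subspace, so here I would argue that the locus of old forms is actually a \emph{linear} condition fiberwise — for fixed $(\phi\colon X\to X')$, the forms pulled back from $X'$ form a linear subspace of $V(D)$ — and then invoke that a constructible subset of $\PP^n$ which is a union of linear subspaces, if not all of $\PP^n$, lies in finitely many proper linear subspaces (the Chevalley image is constructible; its closure is a union of linear subspaces; if it's a proper such union it is contained in finitely many hyperplanes, or one iterates). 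The remaining case is $\Sigma = \PP^n$: then I claim the generic fiber being nonempty forces the \emph{generic} form to be old, and I would then descend the whole family to a finitely generated subfield $\tilde C \subset C$ (take a finitely generated model of $T_0$, of $W_0$, of the universal form, over a field $\tilde C$ over which the dominant map to $\PP^n$ is defined and still dominant); then every $\tilde C$-point of $\PP^n$ lifts to a $\tilde C$-point of $W_0$, i.e. is old, giving $\PP^n(\tilde C) \subset S$ and absorbing this component into the allowed exceptional set.

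The main obstacle I anticipate is the \emph{linearity} claim: ensuring that the non-dominant strata land in genuinely linear (not merely closed) proper subspaces. One cannot just take Zariski closures. The fix is to exploit that "being a pull-back along a fixed $\phi$" is linear, so $S$ is set-theoretically $\bigcup_{[\phi]} \PP(\phi^* V')$, a union of linear subspaces indexed by a finite-type parameter space; a union of linear subspaces of $\PP^n$ that is constructible and not equal to $\PP^n$ must be contained in a finite union of proper linear subspaces (induct on $n$: intersect with a generic hyperplane $H$; the trace is again a constructible union of linear subspaces of $H\cong\PP^{n-1}$; the subspaces contained in $H$ either cover $H$ — impossible unless $S\supset H$, and the union of all $H\supset$-strata over generic $H$ would then cover $\PP^n$, contradiction handled by the dominant case — or by induction lie in finitely many proper subspaces of $H$, and there are only finitely many maximal linear strata not contained in any $H$, i.e. equal to $\PP^n$, again the dominant case). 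Carefully separating the dominant component(s), which feed the $\PP^n(\tilde C)$ term, from the rest, which feed the finitely many proper subspaces, is where the real work lies; the descent to $\tilde C$ in the dominant case is routine spreading-out.
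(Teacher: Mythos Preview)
There is a genuine gap at the very first step. Your degree bound $\deg\phi \leq \deg(\div_0(\omega))$ is borrowed from the argument in \cref{proposition 2.2}, but that argument uses that the \emph{target} form $\omega'$ has at least one zero --- which is precisely the general-type hypothesis. Here $\omega$ ranges over all of $V(D)$, and an old $\omega$ may factor through a form $\omega'$ on $\PP^1$ or on an elliptic curve with \emph{no} zeros (e.g.\ $\omega' = \dd x$, $\omega' = \frac{\dd x}{x}$, or a regular form on $E$). For such $\omega'$ there is no bound on $\deg\phi$: for instance every $c\,\frac{\dd f}{f}\in V(D)$ is the pull-back of $c\,\frac{\dd x}{x}$ along $f\colon X\to\PP^1$, and $\deg f$ is unbounded as $f$ ranges over the group $H=\{f\in C(X)^*:\mathrm{supp}(\div f)\subset\mathrm{supp}(D)\}/C^*$. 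Consequently your parameter scheme $T$ is not of finite type, and the constructibility/Chevalley picture that the rest of the argument relies on is unavailable.

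Even granting a finite-type parameter space on some stratum, the linearity step is not correct as stated. A constructible union of linear subspaces of $\PP^n$ that is not all of $\PP^n$ need \emph{not} lie in finitely many proper linear subspaces: a one-parameter family of lines in $\PP^3$ sweeping out a smooth quadric surface is a counterexample. Your inductive sketch does not rule this out. Finally, in the ``dominant'' case you conclude $\PP^n(\tilde C)\subset S$, but the theorem asks for the opposite containment: you must show the old forms coming from that component all lie in $\PP^n(\tilde C)$, not that all $\tilde C$-points are old.

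The paper's proof proceeds quite differently: it is a case analysis by the genus of the target curve. Quotients of genus $\geq 2$ are finite in number (de Franchis), giving finitely many proper subspaces. Maps to $\PP^1$ are split according to the pole structure of the target form: exact forms give a single proper subspace; forms with exactly two simple poles reduce to $\frac{\dd f}{f}$ with $f$ in the finitely generated group $H$ above, contributing a set controlled by a finitely generated field; the remaining genus-$0$ cases force $\mathrm{supp}(\div f)\subset\mathrm{supp}(D)$ with bounded degree, hence finitely many $f$'s. Maps to elliptic curves are handled by observing that they all factor through $\Jac(X)$, and the endomorphism structure there allows everything to be defined over a finitely generated $\tilde C$. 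The $\PP^n(\tilde C)$ term in the statement arises from these last two sources, not from a dominance dichotomy.
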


\begin{proof}
 Let $m\colon X\rightarrow Y$ be a non-constant morphism of curves of degree $\geq 2$. Consider on $Y$ a 1-form $\eta$ with polar divisor $\sum _{i} m_i[y_i]$ (all $m_i>0$).
 Then $m^*\eta$ has polar divisor $\sum _{i}\sum _{x,\ m(x)=y_i}(m_i-1)e_x[x]$, where $e_x$ denotes the ramification index 
 of $m$ at $x\in X$. Using this formula, one concludes that $C$-vector space of the forms $\eta$ on $Y$ such that 
 $m^*\eta \in V(D)$ is bijectively mapped by $m^*$ to a proper $C$-linear subspace of $V(D)$. We want to show that
 $V(D)$ is larger than the union of all the proper $C$-linear subspaces, since the complement is the set of new forms in
 $V(D)$. \\
 
  There are only finitely many possibilities for $X\rightarrow \tilde{X}$ with the genus of $\tilde{X}$ greater than $1$. 
  This leaves us with considering morphisms to $\PP^1$ and to genus 1 curves. We first consider morphisms $m\colon X\rightarrow \mathbb{P}^1$. These are defined by a subfield $C(f)\subset C(X)$ for some $f\in C(X)\setminus C$. For convenience we identify $m$ and $f$ and moreover work modulo the action of the automorphism group of 
  $\mathbb{P}^1$. Now we consider the cases:\\
  (i). Suppose that the polar support of $\eta$ on $\mathbb{P}^1$ consists of a single point. Then $\eta$ is exact and so is $m^*\eta$ on $X$.
  Then $m^*\eta\in \{\dd h \ | \ h\in C(X), \ \div(df)\geq -D\}$ which is properly contained in $V(D)$.\\
  (ii). Suppose that $\eta$ has two simple poles. After changing variables and rescaling, we may assume $\eta =\frac{\dd x}{x}$. From $f^*\eta =\frac{\dd f}{f}\geq -D$ it follows that $f^{-1}(0)$ and $f^{-1}(\infty )$ are
  subsets of the support of $D$.

  The group $H:=\{f\in C(X)^*\ | \ {\rm support}(\div(f))\subset {\rm support}(D) \}/C^*$ is finitely generated, say by $f_1, \ldots, f_r$. Then for any $f \in H$, the 1-form $\dd f/f$ is a $\QQ$-linear combination of $\dd f_1/f_1, \ldots, \dd f_r/f_r$. Therefore, the set $\{c\frac{\dd f}{f} \ | \ c\in C,\ f\in H\}$ of old forms of this type is a ``small'' subset of 
 $V(D)$ since $\dim V(D)>1$ and $C\neq \mathbb{Q}$.\\ 
 (iii). Suppose that $\eta$ is not of the types discussed in (i) and (ii). Choose a coordinate on $\PP^1$ so that $0$ and $\infty$ are in the support of the polar divisor of $\eta$. The possible $f\in C(X)\setminus C$
 with $\div( f^*\eta)\geq -D$ have again the property that the support of $\div(f)$ lies in the support of the divisor
 $D$. Moreover the degree of the possible $f$ is bounded.  
 This shows that  there are only finitely many $f$ (up to scalars) such that $\div( f^*\eta )\geq -D$.  For each $f$ one obtains a proper $C$-linear subspace of $V(D)$ consisting of old forms.\\

In most cases, $C(X)$ contains only finitely many maximal proper subfields $F$ of genus $1$. In that case, they produce only finitely many proper subspaces of $V(D)$ of old forms.     
In special cases, $C(X)$ can contain infinitely many maximal proper subfields $F$ of genus 1. It can be seen that this set is countable  and so for an uncountable field $C$ the space $V(D)$ contains new forms.\\

In the case of a curve $X$ of genus $g\geq 2$ over a countable, algebraically closed field $C$, the set of the old forms in $V(D)$, coming from morphisms of $X$ to elliptic curves, is ``small'' in a certain sense. 

We sketch the idea for  the most extreme case, which is the case that
the Jacobian variety $\Jac(X)$ of $X$ is isogenous to  a product of $g$ copies of an elliptic curve $E$ which has complex multiplication. 
Morphisms from $X$ to an elliptic curve  $\tilde{E}$ factor over morphisms of $\Jac(X)$ to $\tilde{E}$. Up to isogenies, these morphisms
are homomorphisms $E^g\rightarrow E$ and have the form 
\[ (e_1,\dots ,e_g)\in E^g\mapsto A_1(e_1)+\cdots +A_g(e_g)\in E, \mbox{ where } A_1,\dots ,A_g\in \End(E).\]
 Now $\End(E)$ is a $\mathbb{Z}$-module of rank two. Using this one finds a subfield $\tilde{C}$ of $C$, finitely generated
over $\mathbb{Q}$, such that $X$, $D$, $E$, $\End(E)$ and all morphisms of $X$ to an elliptic curve are defined over $\tilde{C}$.
In particular, $V(D)=C\otimes_{\tilde{C}}V(D)_0$, where $V(D)_0$ is defined over $\tilde{C}$. For a morphism from $X$ to an 
elliptic curve, the corresponding $C$-linear subspace of old forms has the form $C\otimes_{\tilde{C}}W$ where 
$W$ is a proper $\tilde{C}$-linear subspace of $V(D)_0$. 
\end{proof}

\section{Formal solutions and initial values}\label{Section-Formal-Solutions}

 We consider $(X,\omega)$, a curve $X$ over $C$ with a rational 1-form $\omega \neq 0$ and the derivation $D$ dual 
 to $\omega$. Let $K$ denote the algebraic closure of $C\Lau{z}$ provided with the (continuous) derivation $\frac{\dd }{\dd z}$ and let $K^o$ denote the ring of integers of $K$.  {\it A formal solution} will be a solution of $(X,\omega)$ in $K$, i.e. a morphism $\phi\colon  \spec(K)\rightarrow X$ which is compatible with the derivations. This includes morphisms to closed points of $X$, i.e. constant solutions. These are of the form $\spec K \to \spec C \to X$, where the last map corresponds to a pole of $\omega$ in $X(C)$. We denote the set of formal solutions by $(X, \omega)(K)$. \\
 
 In this section, we show that formal solutions of $(X,\omega)$ correspond almost bijectively with points $X(C)$ of $X$. More precisely, we give a bijection $(X,\omega)(K)/\Gal(K/C\Lau{z}) \to X(C)$ that maps a formal solution to its initial value.\\ 

 Since $X$ is projective, irreducible and smooth, $X(K)=X(K^o)$ and any formal solution $\phi$ is induced by a local  $C$-linear homomorphism $O_{X,p}\rightarrow K^o$ compatible with the derivations, for a unique  $p\in X(C)$.
 The derivation $D$ produces a derivation $D\colon O_{X,p}\rightarrow C(X)$ which is continuous with respect to the topology induced by the valuation ring $O_{X,p}$. Therefore $D$ uniquely extends to a continuous derivation $D\colon C[[t]]\rightarrow C\Lau{t}$, where $C[[t]]$  denotes the completion of $O_{X,p}$. 
 
\begin{lemma}\label{4.1} Suppose that the continuous, $C$-linear, derivation $D$ satisfies $D(t)$ has valuation $r$ {\rm (}i.e., $r$ is the order of $D$). There is a parameter $T$ (i.e., $C[[t]]=C[[T]]$) such that one of the following holds:\\
{\rm (i)} $r=1$ and $D(T)=cT$ for some $c\in C^*$,\\
{\rm (ii)} $r < 1$ and $D(T)=T^r$,\\
{\rm (iii)} $r>1$ and $D(T)=T^r+cT^{2r-1}$ for some $c\in C$.
\end{lemma}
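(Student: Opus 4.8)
The plan is to normalize the derivation $D$ on $C[[t]]$ by a sequence of coordinate changes $t \mapsto T$, working order by order in the power series expansion, exploiting the freedom to rescale and reparametrize. Writing $D(t) = \sum_{k \geq r} a_k t^k$ with $a_r \neq 0$ (so $r$ is the valuation of $D(t)$), I want to kill as many coefficients as possible. The three cases in the statement correspond to the classical trichotomy for one-dimensional formal vector fields: $r = 1$ (the "regular singular" or linearizable case), $r < 1$ (really meaning $r \leq 0$, an integer, where $D$ is a unit times $d/dt$-like and can be straightened), and $r > 1$ (the genuinely singular case, where only a formal residue $c$ survives as an invariant, giving a Poincaré–Dulac type normal form).

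**Key steps.** First, in case $r \leq 0$ (the statement's "$r < 1$"), $D(t) = t^r v(t)$ with $v$ a unit; I would set $T$ by solving $dT/T^r \cdot (\text{stuff}) $ — more precisely, look for $T = t \cdot u(t)$ with $u$ a unit so that $D(T) = T^r$. Expanding, this is a recursive system for the coefficients of $u$ that is solvable because at each stage the relevant linear coefficient is nonzero (here one uses $r \neq 1$, so $r - 1 \neq 0$, to divide). Second, in case $r = 1$, $D(t) = t(a_1 + a_2 t + \cdots)$ with $a_1 \neq 0$; set $c = a_1$ and seek $T = t u(t)$ with $D(T) = cT$. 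The recursion for the coefficients of $u$ at order $k$ involves dividing by $(k-1)a_1$, which is nonzero for $k \geq 2$ since $a_1 \neq 0$ and $\operatorname{char} C = 0$ — so the linearization exists formally. Third, the case $r > 1$: here $D(t) = a_r t^r(1 + \cdots)$; first rescale $t$ to make $a_r = 1$, so $D(t) = t^r + \sum_{k > r} b_k t^k$. Now attempt coordinate changes $T = t + (\text{higher order})$ to remove the $b_k$. One finds the homological equation: to kill the $t^k$ term one must solve a linear equation whose coefficient is proportional to $(k - r)$ — wait, more carefully, the obstruction sits exactly at $k = 2r - 1$. For $k \neq 2r-1$ the coefficient $k - (2r-1)$ (up to sign and the leading term) is nonzero and one can remove $b_k$; at $k = 2r - 1$ the coefficient vanishes and $b_{2r-1} =: c$ is a genuine invariant. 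So one is left with $D(T) = T^r + c T^{2r-1}$.

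**The main obstacle.** The genuinely delicate point is case (iii): I must set up the homological equation correctly and verify that the resonance is *exactly* at degree $2r - 1$ and nowhere else. The bookkeeping is: if $T = t + \alpha t^m$ then $D(T) = D(t) + m\alpha t^{m-1} D(t) = t^r + \cdots + m\alpha t^{m+r-1} + \cdots$, while we want $D(T) = T^r + cT^{2r-1} = t^r + r\alpha t^{m+r-1} + \cdots$; comparing the $t^{m+r-1}$ coefficients, the change in the coefficient of that degree is $(m - r)\alpha$ plus contributions from lower-order already-normalized terms, so one can solve unless $m = r$, i.e. unless the degree $m + r - 1 = 2r - 1$. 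I would organize this as an induction on the degree $k > r$: assuming $D(t) = t^r + cT^{2r-1}\text{-placeholder} + b_k t^k + (\text{order} > k)$ with all intermediate $b_j$ ($r < j < k$, $j \neq 2r-1$) already zero, apply $T = t - \frac{b_k}{k-r} t^{k-r+1}$ to remove $b_k$ when $k \neq 2r-1$. One must also check these coordinate changes converge formally (they do: each only alters strictly higher-order terms, so the composite is a well-defined formal series). Finally I should note $r$ is an integer in cases (i) and (iii) and $r \leq 0$ in case (ii) — since $D$ is a continuous $C$-linear derivation, $r = \operatorname{ord} D(t) \in \mathbb{Z}$, and the trichotomy $r = 1$, $r < 1$, $r > 1$ is exhaustive; the case $r < 1$ is the case $r \in \{0, -1, -2, \dots\}$, which is where $D$ is a unit multiple of an everywhere-nonvanishing field and can be straightened to $D(T) = T^r$ by an integrating-factor computation as above.
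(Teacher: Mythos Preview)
Your approach is correct and is the classical Poincar\'e--Dulac normal-form argument: iteratively apply coordinate changes $T = t + \alpha t^m$, compute the induced change in the coefficient of $T^{m+r-1}$, and solve for $\alpha$ whenever the linear coefficient $(m-r)$ is nonzero, leaving only the resonant term at degree $2r-1$ in case~(iii). (One slip: in your explicit formula you wrote the denominator as $k-r$; it should be $k-2r+1$, consistent with your earlier correct identification of the obstruction at $k=2r-1$.)

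The paper takes a different, more direct route. Rather than iterating, it makes a single substitution that reduces each case to an elementary integration: in~(i) it writes $T = t\exp(\sum a_n t^n)$ and solves $\frac{D(T)}{T}=c$ for the $a_n$; in~(ii) it sets $G=T^{1-r}$ so that $D(T)=T^r$ becomes $D(G)=1-r$, a constant, and then recovers $T$ as a $(1-r)$th root; in~(iii) it sets $F=T^{r-1}$ so the target equation becomes $D(1/F)=(1-r)(1+cF)$, and observes that the coefficient of $t^{-1}$ is absent from $D(\text{anything})$, which forces exactly one constant $c$ to appear on the right to compensate. Your method is the textbook one and makes the resonance structure explicit; the paper's substitution trick is slicker and gives $c$ a residue-like interpretation (it is the obstruction to $T^{1-r}$ having a formal $D$-antiderivative), at the cost of being less systematic.
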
 
 \begin{proof} Suppose that $r=1$. Suppose that $\frac{D(t)}{t}\equiv c\mod (t)$ with $c\in C^*$. 
 Write $T=te^{\sum _{n\geq 1}a_nt^n}$ with all $a_n\in C$.
  Then $\frac{D(T)}{T}=\frac{D(t)}{t}(1+\sum _{n\geq 1}a_nnt^n)$. Therefore the equation $\frac{D(T)}{T}=c$ leads to unique elements $a_n$. Further $T$ is unique up to multiplication by an element in $C^*$.\\
 
Suppose now $r<1$. A solution $T$ of $D(T)=T^r$ satisfies the equation $D(T^{-r+1})=-r+1$. Write $T^{-r+1}=\sum_{n\geq -r+1}a_nt^n$. The equation $-r+1=D(\sum _{n\geq -r+1}a_nt^n)=(\sum_{n\geq -r+1}na_nt^{n-1})D(t)$ determines clearly all $a_n$.
 Then $T=  (\sum _{n\geq -r+1}a_nt^n)^{1/(1-r)}$ is well defined and unique up to multiplication by $\zeta \in C$ with $\zeta^{-r+1}=1$. \\
 
 Suppose $r>1$. 
 Write $F=T^{r-1}$. To get $D(T) = T^r + cT^{2r-1}$ we have to solve $D(F)=(r-1)(F^2+cF^3)$ or equivalently $D(\frac{1}{F})=(1-r)(1+cF)$. Write 
 $\frac{1}{F}=\sum _{i\geq 1-r} a_it^i$ and $D(t)\sum _{i\geq r}b_it^i$. The equation now reads
 \[(\sum _{i\geq 1-r}ia_it^{i-1})\cdot (b_rt^r+b_{r+1}t^{r+1}+\cdots )=(1-r)(1+c(a_{1-r}^{-1}t^{r-1}+\cdots)).\]
 In the series of $D(\frac{1}{F})$ the term $t^{-1}$ is missing. This is compensated by the $c$ on the righthand side. 
  Thus  there is a unique solution. Then $T$ is a $(r-1)$th root of $F$.  \end{proof}

 Let $\phi$ be a formal solution. Then $\phi$ induces a homomorphism $O_{X,p}\rightarrow K^o$ for a unique point $p\in X(C)$, which in turn induces 
 $\hat {\phi}\colon \widehat{O_{X,p}}\rightarrow K^o$ compatible with the derivations. Let $r$ be the order of $D$ at $p$, i.e. the order of $D(t)$ for $t$ a local parameter at $p$. By \cref{4.1}, one can choose the parameter $T$ for $\widehat{O_{X,p}}$ such that:
 \begin{itemize}
     \item if $r > 1$, we have $D(T) = T^r + cT^{2r-1}$ for some $c \in C$. In this case, $\hat\phi(T) = 0$ and $\phi$ is a constant solution.
     \item if $r = 1$, we have $D(T) = cT$ for some $c \in C$. Again, $\hat \phi(T) = 0$ and $\phi$ is a constant solution.
     \item if $r = 0$, we have $D(T) = 1$ and so $\hat \phi(T) = z$. 
     \item if $r < 0$, we have $D(T) = T^r$, and $\hat\phi (T)=\zeta \cdot (1-r)^{\frac{1}{1-r}}z^{\frac{1}{1-r}}$ where $\zeta ^{1-r}=1$.
 \end{itemize}
 
 Put $e=\operatorname{lcm}(\{-r+1\})$, where $r$ ranges over  the pole orders of the derivation $D$. Let $(X, \omega)(K)$ and  $(X, \omega)(K^o)$ denote the 
 subsets of $X(K)$ and  $X(K^o)$ consisting of the formal solutions. The proposition below  easily follows from \cref{4.1} and the above remarks.
 
 \begin{proposition}\label{4.2} $(X, \omega)(K^o) = (X,\omega)(C[[z^{1/e}]])$. \\
 The canonical map $(X,\omega)(C[[z^{1/e}]])\rightarrow X(C)$, induced by
 $C[[z^{1/e}]]\rightarrow C$, produces a bijection $(X,\omega)(C[[z^{1/e}]])/\Gal \rightarrow X(C)$. Here $\Gal $ denotes the Galois group
 of $C\Lau{z^{1/e}}\supset C\Lau{z}$.
  \end{proposition}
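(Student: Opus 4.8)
The plan is to reduce the whole statement to the local computation carried out just above, and in particular to \cref{4.1}. Recall that a formal solution $\phi$ is induced by a \emph{local} $C$-algebra homomorphism $O_{X,p}\to K^o$ for a unique $p\in X(C)$, hence by a homomorphism $\hat\phi\colon\widehat{O_{X,p}}=C[[T]]\to K^o$ compatible with the derivations, where $T$ is the parameter furnished by \cref{4.1}; locality forces $\hat\phi(T)\in\m_K$. Writing $s:=\hat\phi(T)$, a power series of strictly positive valuation, compatibility with the derivations says exactly that $\tfrac{\dd s}{\dd z}=\hat\phi(D(T))$ with $D(T)$ in one of the normal forms $cT$, $T^r$, $T^r+cT^{2r-1}$ recorded above. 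Since $\hat\phi$ is the substitution $\sum a_nT^n\mapsto\sum a_ns^n$, it suffices to locate $s$: once $s$ lies in a subring of $K^o$ containing $C$, so does the whole image of $\hat\phi$.

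For the equality $(X,\omega)(K^o)=(X,\omega)(C[[z^{1/e}]])$ (the inclusion $\supseteq$ being automatic), I would solve the relevant normal-form equation for $s$ inside $K^o$, using only that the constants of $K$ are $C$ and that valuations on $K$ lie in $\QQ$. A short valuation argument should show that for $r\geq1$ the only solution with $v(s)>0$ is $s=0$, so $\phi$ is the constant solution at the pole $p$ of $\omega$; for $r=0$, $\tfrac{\dd s}{\dd z}=1$ forces $s=z$ (the integration constant lies in $C$ and is killed by the condition $v(s)>0$); and for $r\leq-1$, $\tfrac{\dd s}{\dd z}=s^r$ rewrites as $\tfrac{\dd}{\dd z}(s^{1-r})=1-r$, hence $s^{1-r}=(1-r)z$ and $s=\zeta\,((1-r)z)^{1/(1-r)}$ for some $(1-r)$-th root of unity $\zeta$. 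In every case $s\in C[[z^{1/e}]]$: trivially when $s\in\{0,z\}$, and because $1-r$ divides $e$ when $s=\zeta((1-r)z)^{1/(1-r)}$, since $p$ is then a point where $D$ has a pole and therefore $-r+1=1-r$ occurs among the integers whose lcm defines $e$.

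It then remains to analyse the canonical map $(X,\omega)(C[[z^{1/e}]])\to X(C)$. Surjectivity is clear from the solutions just exhibited: the constant solution when $r\geq1$, $s=z$ when $r=0$, and $s=((1-r)z)^{1/(1-r)}$ when $r\leq-1$ all reduce to $p$. For injectivity after dividing by $\Gal:=\Gal(C\Lau{z^{1/e}}/C\Lau{z})$, write $\Gal=\langle\sigma\rangle$ with $\sigma(z^{1/e})=\xi z^{1/e}$ for $\xi$ a primitive $e$-th root of unity; one checks $\sigma$ commutes with $\tfrac{\dd}{\dd z}$, so $\sigma$ permutes the fibre of formal solutions over a fixed $p$. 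By the previous paragraph this fibre has one element when $r\geq0$, and consists of the $1-r$ solutions $\zeta((1-r)z)^{1/(1-r)}$ when $r\leq-1$. In the latter case $\sigma$ multiplies $z^{1/(1-r)}=(z^{1/e})^{e/(1-r)}$ by $\xi^{e/(1-r)}$, which is a primitive $(1-r)$-th root of unity, so $\sigma$ permutes the choices of $\zeta$ transitively; hence every fibre is a single $\Gal$-orbit and $(X,\omega)(C[[z^{1/e}]])/\Gal\to X(C)$ is a bijection.

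The one genuinely delicate point will be this last bookkeeping: ruling out, via the constant-field and valuation arguments, any spurious solutions of the normal-form equations in $K^o$ beyond those visible in $C[[z^{1/e}]]$ (for instance exponential solutions of $\tfrac{\dd s}{\dd z}=cs$), and then matching the resulting torsor of $(1-r)$-th roots of unity exactly with the cyclic action of $\Gal(C\Lau{z^{1/e}}/C\Lau{z})$ through the identity $z^{1/(1-r)}=(z^{1/e})^{e/(1-r)}$. Everything else is a direct application of \cref{4.1}.
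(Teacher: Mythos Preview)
Your proposal is correct and follows exactly the route the paper intends: the paper's own ``proof'' is simply the assertion that the proposition follows from \cref{4.1} together with the bullet-point computation of $\hat\phi(T)$ in each case $r>1$, $r=1$, $r=0$, $r<0$, and you have faithfully spelled out those details (including the valuation/constant-field argument forcing $s\in\{0,z,\zeta((1-r)z)^{1/(1-r)}\}$ and the transitivity of the $\Gal$-action on the roots of unity). There is nothing to add or correct.
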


\begin{remarks} 
 (1). One can state \ref{4.2} without reference to $e$ as:
{ \it The canonical map $(X,\omega)(K^o)\rightarrow X(C)$ induces a bijection 
$(X,\omega)(K^o)/\Gal \rightarrow X(C)$ where $\Gal $ is the Galois group of $K/C\Lau{z}$.}\\

\noindent (2). Let $p$ be a pole of $D$ of order $r$. Then there exists  a differential homomorphism
$\phi \colon O_{X,p}\rightarrow K$ which is not constant and has a pole (at $p$). Let $t\in O_{X,p}$ be a local parameter. Since
$\phi (t)\not \in K^o$, this homomorphism cannot be extended to the completion of $O_{X,p}$. 

 However, $\phi$ extends to $\phi \colon C(X)\rightarrow K$. Since $X$ is irreducible, smooth and complete there is a unique point
$q\in X(C)$ such that $\phi (O_{X,q})\subset K^o$. This leads to a differential homomorphism 
$\phi\colon \widehat{O_{X,q}}\rightarrow K^o$. We conclude that  $(X,\omega)(K)=(X,\omega)(K^o)$.

A simple example of this phenomenon is: \\
$X=\mathbb{P}_x^1$ and $D$ is given by $D(x)=x^5$.
Then $\phi \colon C(x)\rightarrow C\Lau{z^{1/4}}$ with $\phi (x)=(-4z)^{-1/4}$ is a differential homomorphism. Clearly $\phi$ does not extend to a homomorphism
$C[[x]]\rightarrow C\Lau{z^{1/4}}$. However, $\phi$ extends to a homomorphism $C[[\frac{1}{x}]]\rightarrow C\Lau{z^{1/4}}$ and the value $ (-4z)^{-1/4}$ for $\phi(x)$ leads to an element of $(\mathbb{P}^1, x^{-5}\dd x)(K^o)$.  
In fact, the set of ``all'' solutions of this equation is $\{ (c-4z)^{-1/4}| c\in C\}\cup \{0\}$. \\

 \noindent (3). $(X, \omega)(K^o)\rightarrow X(C)$ is bijective if and only if $D$ has no poles. Thus we have only the following cases:\\
 $g=0$ and $\omega =\dd x$ or $\omega =\frac{\dd x}{a x}$ with $a\in C^*$ or,\\
 $g=1$, $X$ has equation $y^2=x^3+ax+b$ and $\omega =c\frac{\dd x}{y}$.\\
 In the last case $(X, \omega)(K^o)$ is invariant under addition with $X(C)$. \\
 
 \end{remarks}

 \begin{remark} \label{4.5} {\it Can we describe ``all'' solutions of $(X,\omega)$?}  \\
 Let $\tilde C$ be an algebraically closed field properly containing $C$. By the above theory, points in $X(\tilde C) \setminus X(C)$ correspond to `additional' solutions in $\tilde C\Lau{z}\alg$. In the
 following example we discuss these extra solutions. 
 
 \begin{example}
 Consider the differential equation $u'= u^3 - 1$ corresponding to the pair $(\PP^1, \frac{\dd t}{t^3 - 1})$. Let $\tilde c \in \tilde C \setminus C$ be arbitrary, and let $y \in \tilde C\Lau{z}$ be the formal solution corresponding to the point $t = \tilde c$ in $\PP^1(\tilde C)$. 
 \end{example}

 
 Let again $K=C\Lau{z}\alg$. Consider the differential subalgebra $K[y]$ of $\tilde C\Lau{z}\alg$.
For any solution $\alpha \in K$, the maximal ideal $(y-\alpha)$ is invariant under $D = \frac{\dd}{\dd z}$ because $D(y-\alpha)=y^3-1 -(\alpha ^3-1)=y^3-\alpha ^3$. Thus there are infinitely many $D$-invariant maximal ideals.\\

 \noindent {\it Claim: $C$ is the field of constants of $K(y)$.}
 \begin{proof}
 A new constant has the form $f=a\prod (y-\alpha _i)^{n_i}$ with distinct elements $\alpha _i\in K$, $a\in K^*$, $n_i\in \mathbb{Z}$.
 Then $0=\frac{Df}{f}=\frac{a'}{a}+\sum _i n_i\cdot \frac{Dy-\alpha_i'}{y-\alpha _i}=
 \frac{a'}{a}+\sum _in_i\frac{y^3-1-\alpha_i'}{y-\alpha_i}$. Since
  $\alpha_i$ is not a pole of this expression one has $\alpha_i'=\alpha_i^3-1$. It follows that $\sum n_i=0$, $\sum n_i\alpha_i=0$
  and $\frac{a'}{a}+\sum n_i\alpha_i^2=0$. Since the $\alpha_i$ are known to be algebraically 
  independent over $C$,  we obtain a contradiction. \end{proof}

  From the fact that $K(y)\const = C$, it follows that there is a $K$-linear differential homomorphism from $K(y)$ to $\mathcal{U}$ for a suitable differentially closed field $\mathcal{U}$ with field of constants $C$. Indeed one can take for $\mathcal{U}$ the differential closure of the field $K(y)$. In this way, we can produce many additional solutions for this differential equation, without increasing the subfield of constants.
  We expect the same phenomenon to occur for all autonomous equations of general type.

  This is in contrast with the situation for systems of linear equations and for equations of exact, exponential and Weierstrass type: for those types of equations, adding new solutions to $K$ introduces new constants. 
  \end{remark}

\section{\texorpdfstring{$D^n$}{Dn}-finiteness of solutions}

 Let $K$ be a differential field (of characteristic 0) with algebraically closed field of constants $C$.
 An extension of differential fields $F\supset K$ is called an {\it iterated Picard--Vessiot extension}
 if there exists a sequence of differential fields  $F=K_n\supset \cdots \supset K_1\supset K_0=K$ where each extension $K_{i+1}\supset K_{i}$ is a Picard--Vessiot extension.

 From the literature, especially \cite{Jimenez-Pastor-Pillwein}, we adopt the following definitions:\\
 An element $\xi \in C\Lau{z}$ is {\it $D$-finite} (over $C(z)$) if it satisfies a linear scalar differential equation with coefficients
 in $C(z)$. For $n > 1$ we inductively define $\xi$ to be {\it $D^n$-finite} (over $C(z)$) if $\xi$ satisfies
 a linear scalar differential equation with coefficients which are $D^{n-1}$-finite.
 An element $\xi\in C\Lau{z}$ is called {\it $D$-algebraic} (over $C(z)$) if there is a non-trivial polynomial relation
 $P(\xi ^{(m)},\xi^{(m-1)},\dots ,\xi,z)=0$ with coefficients in $C$ between the derivatives of $\xi$ and $z$.\\

 One easily verifies that a $D^n$-finite element $\xi$ lies in the iterated Picard--Vessiot extension of $C(z)$ defined
 by all the linear scalar equations involved in the $D^n$-property of $\xi$.\\

 In \cite{Jimenez-Pastor-Pillwein} it is shown that $D^n$-finiteness defines strictly increasing subsets of $C\Lau{z}$. A natural question is: 
 
 {\it Is there a $D$-algebraic element which is not $D^n$-finite for any $n\geq 1$}?\\
 The positive answer is given in \cref{dfinite}. \\
 
  \begin{proposition}  \label{dfinite}
 Let $\xi\in C\Lau{z}$ be a non-constant solution of an autonomous first order equation of general type.
  Then $\xi$ is not contained in any iterated Picard-Vessiot extension of $C(z)$. 
  In particular, $\xi$ is not $D^n$-finite over $C(z)$ for any $n$. 
  \end{proposition}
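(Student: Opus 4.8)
The plan is to argue by contradiction. Suppose $\xi \in C\Lau{z}$ is a non-constant solution of an autonomous first order equation $(X,\omega)$ of general type, and suppose $\xi$ lies in some iterated Picard--Vessiot extension $F = K_N \supset \cdots \supset K_0 = C(z)$. The key structural fact I would exploit is that iterated Picard--Vessiot extensions of $C(z)$ have $C$ as their field of constants (each Picard--Vessiot step preserves the field of constants, being generated by solutions of linear equations over a field with constants $C$, and this is stable under finite iteration). So $F \supset C(z)$ is a differential field with field of constants exactly $C$, and of finite transcendence degree over $C$ (each PV step adds only finitely many algebraically independent elements — at most $n^2$ for an order $n$ equation, as noted in the introduction).

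Next I would reduce to the ``new'' case. By \cref{proposition 2.2}, $(X,\omega)$ is the pull-back via some finite $\tau\colon X \to X'$ of a new equation $(X',\omega')$ of general type. The morphism $\tau$ induces a differential embedding $C(X') \hookrightarrow C(X)$, so the solution $\xi$ of $(X,\omega)$ restricts to (i.e. its associated differential homomorphism $C(X) \to F$ composed with $C(X') \to C(X)$ gives) a non-constant solution $\xi'$ of $(X',\omega')$ in $F$; here ``non-constant'' is preserved because $\xi$ is not algebraic over $C$ and $C(X')$ has transcendence degree $1$ over $C$, so its image is not contained in $C$. Now $(X',\omega')$ is new and of general type, so \cref{theorem 1.1} applies: any set of distinct non-constant solutions of $(X',\omega')$ in $F$ is algebraically independent over $C$.

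The contradiction now comes from producing infinitely many distinct non-constant solutions of $(X',\omega')$ inside $F$ — or more economically, from a transcendence-degree count. Here is the cleanest route. The field $C(z) \subset F$ carries the translation-type structure: the solutions of $(X',\omega')$ in $F$ form, after fixing one solution, a set acted on by the ``$z \mapsto z+c$'' shifts, $c \in C$. Concretely, if $\phi\colon C(X') \to F$ is the differential homomorphism corresponding to $\xi'$ and $x'$ is a nonconstant coordinate on $X'$, then for the complex-analytic / formal picture described in the introduction one has, locally, $z + c = \int^{\phi(x')} \omega'$; the key point I would make rigorous is that $F$, containing $C(z)$, contains the translates $\xi'_c$ obtained by the substitution $z \mapsto z + c$ applied to the power series / algebraic expression for $\xi'$ — these translates again lie in $C\Lau{z}$ (or a finite algebraic extension) and remain solutions of the autonomous equation, since the equation has constant coefficients. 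For distinct values of $c$ these solutions are distinct (distinct initial values, by \cref{4.2} / the theory of Section~6). By \cref{theorem 1.1} (or Theorem 2.1b), any finite collection of them is algebraically independent over $C$, forcing $\trdeg_C F = \infty$. This contradicts $\trdeg_C F < \infty$.

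The main obstacle I expect is making precise the claim that $F$ actually contains infinitely many of these translated solutions, i.e. that the shift $z \mapsto z+c$ can be realized inside $F$ (or a controlled extension of it with the same constants). One clean way around it: instead of translating within $F$, observe directly that $F \supset C(z)$ and $\xi' \in F$ together with the autonomous relation $P'(\xi', (\xi')') = 0$ already pin down $\xi'$ as an algebraic function of $z$ over $C$ via $\omega'$; then the finitely-generated field $\tilde C \subset C$ over which $X'$, $\omega'$, and the datum of $\xi'$ are defined would have to contain enough constants to contradict newness — but this essentially re-runs the descent arguments of \cref{3.2}, \cref{3.3} and is the delicate technical heart. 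An even shorter alternative, which I would try first: since $F$ lies in an iterated Picard--Vessiot extension, $F$ embeds in a differentially closed field $\mathcal U$ with constants $C$; inside $\mathcal U$ the set $(X',\omega')(\mathcal U)^{nc}$ is genuinely infinite (as in \cref{4.5} and the discussion of Rosenlicht's example, where one adjoins solutions with new initial values without enlarging the constants), and each such solution has, by \cref{theorem 1.1}, transcendence degree $1$ over $C$ with the whole family algebraically independent; but any single such solution that happens to be $D^n$-finite lies in an iterated PV extension of finite transcendence degree, and one shows the original $\xi'$ cannot be distinguished from infinitely many others at finite transcendence cost — contradiction. I would present the translation argument as the main line since it is the most transparent, and flag the descent route as the fallback.
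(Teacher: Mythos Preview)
Your reduction to the new case and the observations that an iterated Picard--Vessiot extension of $C(z)$ has constants $C$ and finite transcendence degree over $C$ are correct and are also used (implicitly, via \cref{finitely many solutions}) in the paper. But the mechanism you propose for the contradiction does not work, and you seem aware of this: the substitution $z \mapsto z+c$ is an automorphism of $C(z)$ and of $C\Lau{z}$, but there is no reason it should preserve $F$. An iterated Picard--Vessiot extension is built from linear equations with coefficients in $C(z)$ which are generally \emph{not} translation-invariant, so the translate $\xi'_c$ of your solution, while still a solution of the autonomous equation in $C\Lau{z}$, need not lie in $F$ at all. Your fallback sketches do not close this gap: the descent idea is not a proof, and the differentially closed field argument only produces solutions in $\mathcal U$, not in $F$, so no transcendence-degree bound on $F$ is violated.

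The paper's argument is quite different and uses the differential Galois groups of the tower directly. One lets $G$ be the Galois group of the top step $K_n/K_{n-1}$; every $\sigma \in G$ sends $\xi$ to another non-constant solution of $(X,\omega)$ in $K_n$. Since $K_n$ has finite transcendence degree over $C$, \cref{finitely many solutions} forces the orbit $\{\sigma(\xi)\}$ to be finite, hence $\xi$ is fixed by the connected component $G^o$ and is therefore algebraic over $K_{n-1}$. One then repeats the argument with the Galois group of $K_{n-1}/K_{n-2}$ acting on the coefficients of the minimal polynomial of $\xi$, pushing $\xi$ down to being algebraic over $K_{n-2}$, and so on by induction until $\xi$ is algebraic over $C(z)$, a contradiction. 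The missing idea in your proposal is precisely this use of the Galois action to force $\xi$ down the tower; the finiteness of solutions is used to bound a Galois orbit, not to be contradicted by an infinite supply of solutions.
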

 \begin{proof}  Assume that $\xi\in C\Lau{z}$ is a non-constant solution of the autonomous equation of general type $(X,\omega)$. Suppose that $\xi$  lies in the iterated Picard--Vessiot extension 
 $F=K_n\supset \cdots \supset K_1\supset K_0=C(z)$. We will deduce a contradiction by induction on $n$.\\
 
 Let $G$ be the differential Galois group of $K_n\supset K_{n-1}$. For $\sigma \in G$, the element $\sigma(\xi)$
 is a solution of $(X,\omega)$. Since $K_n$ is of finite transcendence degree over $C$, \cref{finitely many solutions} shows that there are only finitely many
 possibilities for $\sigma (\xi)$. 
 
  We conclude that $\xi$ lies in the fixed field $K_n^{G^o}$ where $G^o$ denotes the 
  connected component of the identity of $G$. Thus $\xi$ is algebraic over $K_{n-1}$. For $n=1$, this yields the contradiction that
 $\xi$ is algebraic over $C(z)$.\\
 
   For $n>1$ we consider $P:=T^m+a_{m-1}T^{m-1}+\cdots +a_0$, the minimum polynomial  of $\xi$ over
   $K_{n-1}$. Let $\sigma$ be an element of the differential Galois group of $K_{n-1}\supset K_{n-2}$. The element
   $\sigma$ extends to a differential automorphism $\tilde{\sigma}$ of the algebraic closure $K^{alg}_{n-1}$ of $K_{n-1}$.
   Then $\tilde{\sigma}(\xi)\in K_{n-1}^{alg}$ is a solution of $(X,\omega)$ and is a zero of 
   $\sigma(P)=T^m+\sigma(a_{m-1})T^{m-1}+\cdots +\sigma(a_0)$. 
   There are only finitely many possibilities for $\tilde{\sigma}(\xi)$. Then there are  also only finitely many possibilities
   for $\sigma(P)$ and so $a_{m-1},\dots ,a_0$ are algebraic over $K_{n-2}$.  Hence $\xi$ is algebraic over $K_{n-2}$.
   The normalization $L$ of the algebraic extension $K_{n-2}(\xi)\supset K_{n-2}$ is a Picard--Vessiot extension.
    Now
   $\xi\in L\supset K_{n-2}\supset \cdots \supset K_0$. By induction this yields a contradiction.   \end{proof}
 
 An explicit example of a function that is $D$-algebraic but not $D^n$-finite for any $n$, is any non-constant solution $\xi \in C\Lau{z}$ of the equation
$u'=u^3-u^2$. Clearly $\xi$ is $D$-algebraic. The key property used in the proof of \cref{dfinite} is algebraic independence between non-constant solutions of a new and general type differential equation. In the particular case of the equation $u'= u^3 - u^2$, there is a direct proof of this property, first given by Rosenlicht (\cite[Corollary to Proposition 1]{R1}) and relying on a result of James Ax (\cite[Proposition 2]{Ax}. We give a self-contained version of this proof below.

\begin{lemma}\label{Lemma Rosenlicht}
Let $F$ be a differential field with field of constants $C$.
If non-constant solutions $x_1,x_2\in F$ of the equation $u'=u^3-u^2$ are algebraically dependent over $C$, then
$x_1=x_2$.
\end{lemma}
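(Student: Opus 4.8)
The plan is to use the differential equation to pin down, for each non-constant solution $x_i$, the logarithmic derivative structure, and then exploit the assumed algebraic dependence to force $x_1 = x_2$ directly. First I would observe that for a non-constant solution $x$ of $u' = u^3 - u^2$ we have $x'/x = x^2 - x$ and $x'/x^2 = x - 1$, so that $x' = x^3 - x^2$ shows $x$ is $D$-algebraic of order $1$ and $C(x)$ is a differential subfield of $F$. The key identity I want to extract is that the $1$-form $\omega = \dd t/(t^3 - t^2)$ is \emph{not exact, not of exponential type, and not of Weierstrass type} — equivalently $(\PP^1, \omega)$ is of general type — and moreover it is new, since $\deg$ considerations (a degree-$d$ pull-back would force $\omega$ to have at least $d$ zeros, but $\omega$ has none on $\PP^1 \setminus \{0,1,\infty\}$... in fact one checks the order of vanishing at $\infty$) rule out proper pull-backs. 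Granting this, $(\PP^1, \omega)$ is new and of general type, so Theorem 2.1b (in the case $n = 2$, i.e.\ Theorem~\ref{theorem 2.1}) applies.

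Concretely, if $x_1, x_2 \in F$ are non-constant solutions that are algebraically dependent over $C$, then the two differential embeddings $\rho_i \colon C(\PP^1) = C(t) \to F$, $t \mapsto x_i$, have images generating a field $F_0$ finite over each $\im(\rho_i)$, and $F_0$ is $D$-stable. This yields a curve $Y$ with finite maps $\phi_i \colon Y \to \PP^1$ and $\phi_1^*\omega = \phi_2^*\omega$. Applying Theorem~\ref{theorem 2.1} gives a curve $\tilde X$, a $1$-form $\tilde\omega$, and finite $\psi_i \colon \PP^1 \to \tilde X$ with $\psi_i^*\tilde\omega = \omega$; since $(\PP^1, \omega)$ is new, each $\psi_i$ is an isomorphism, and composing $\psi_1^{-1}\circ\psi_2$ gives an automorphism $\phi$ of $\PP^1$ with $\phi^*\omega = \omega$. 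The reasoning below Theorem~2.1b shows such a $\phi$ has finite order, hence (since $\omega$ is new) $\phi = \id$. This forces $\im(\rho_1) = \im(\rho_2)$ inside $F$, and tracing through, $\rho_1 = \rho_2$, i.e.\ $x_1 = x_2$.

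Alternatively — and this is likely closer to the self-contained elementary proof the authors advertise — I would avoid Theorem~\ref{theorem 2.1} entirely and argue by hand, in the spirit of Rosenlicht and Ax. Suppose $x_1, x_2$ are algebraically dependent over $C$; then there is an irreducible polynomial relation $P(x_1, x_2) = 0$, so the curve $Y = \{P = 0\} \subset \PP^1 \times \PP^1$ carries a derivation compatible with both projections. On $Y$ one has a single meromorphic $1$-form $\eta$ pulling back from $\omega$ on each factor. I would analyze the poles: $\omega$ has a double pole at $t = 0$ (with residue... let me recompute: $\dd t/(t^2(t-1))$, partial fractions gives a $1/t^2$ term, a $1/t$ term with residue $-1$, and a $1/(t-1)$ term with residue $1$) — so $\omega$ has a double pole at $0$, a simple pole at $1$ with residue $+1$, a simple pole at $\infty$ with residue... summing residues to zero forces residue $0$ at $\infty$, so actually a double pole at $\infty$ as well. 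The crucial rigidity is that $\omega$ is \emph{exact outside} the logarithmic part $\dd(t-1)/(t-1) - \dd t/t$ contributes residues, but the key elementary input (Ax's theorem) is that if $f_1, f_2$ are non-constant with $\dd f_1/f_1 = \dd f_2/f_2$ forced along $Y$ then $f_1/f_2$ is constant. I would combine: comparing residues of $\eta$ over the points of $Y$ lying above $(1,1)$, $(0, \cdot)$, $(\cdot, 0)$, $(\infty,\cdot)$, $(\cdot,\infty)$, and using that the exact part $\dd(\text{something})$ can have no residues, to conclude the two projections $Y \to \PP^1$ each have degree $1$, hence $Y$ is a graph of an automorphism of $\PP^1$ preserving $\omega$, hence the identity.

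\textbf{Main obstacle.} The hard part will be the \emph{local/residue bookkeeping} that forces the covering maps $\phi_i$ (equivalently the projections from $Y$) to have degree exactly $1$: one must rule out all proper pull-backs of $\omega$ by a genuinely elementary argument, handling the double poles at $0$ and $\infty$ and the simple pole at $1$ simultaneously, and showing no non-trivial $f \in C(Y)$ can have $\dd f/f$ or $\dd f$ accounting for the discrepancy — this is exactly where Ax's theorem on $\dd$-logarithms (algebraic independence of $\log$'s of multiplicatively independent functions) is needed. A secondary subtlety is making sure the field of constants does not grow when passing to $Y$, so that "algebraically dependent over $C$" really does give a geometric relation over $C$ rather than over some extension; I would handle this by a specialization argument as in \cref{3.2}, or simply note $Y$'s constant field is an algebraic, hence trivial, extension of $C$ since $C$ is algebraically closed and $Y$ is a curve.
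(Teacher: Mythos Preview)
Your first approach—via Theorem~\ref{theorem 2.1} and newness of $(\PP^1,\omega)$—is logically correct, but it is exactly the machinery this lemma is meant to \emph{bypass}; the paper's stated purpose here is a self-contained elementary argument independent of the generalized-Jacobian theorem.

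Your second approach is in the right spirit but has a computational error and, more importantly, never reaches the concrete mechanism. The error: $\omega = \dd t/(t^2(t-1))$ has a simple \emph{zero} at $\infty$, not a double pole (in $s=1/t$ one finds $\omega=-s\,\dd s/(1-s)$; this is also forced by $\deg K_{\PP^1}=-2$). The gap: you frame the task as forcing $\deg\phi_i=1$ by residue bookkeeping and an appeal to Ax, but you never say how. The paper's argument is different and sharper. It works on the single curve $Y$ with function field $C(x_1,x_2)$, writes $\omega_1=h\,\omega_2$ for some $h$ in that function field, and uses the action $\tilde D$ of the derivation on $\Omega^1$ (Proposition~\ref{3.1}) together with $\tilde D(\omega_i)=0$ to conclude $h\in C$. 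Then comes the explicit decomposition you did not isolate,
\[
\frac{\dd x}{x^3-x^2}\;=\;\frac{\dd\!\left(\dfrac{x-1}{x}\right)}{\dfrac{x-1}{x}}\;+\;\dd(x^{-1}),
\]
a $\dd\log$ part plus an exact part. Rearranging $\omega_1=h\,\omega_2$ equates a $\dd\log$-combination with an exact form; exact forms have zero residues while the $\dd\log$ residues are nonzero integers, which forces $h\in\QQ$. Writing $h=a/b$ with $\gcd(a,b)=1$ and setting $f=\bigl(\tfrac{x_1-1}{x_1}\bigr)^a\bigl(\tfrac{x_2-1}{x_2}\bigr)^{-b}$, one gets $\dd f/f=\dd(ax_2^{-1}-bx_1^{-1})$ exact, hence $f\in C^*$, hence $ax_2^{-1}-bx_1^{-1}\in C$, hence $C(x_1)=C(x_2)$. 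The final step—that the only automorphism of $C(x)$ preserving $\omega$ is the identity—you do have. So the two moves you are missing are the $\tilde D$-trick pinning $h$ to a constant, and the log-plus-exact splitting that converts everything into an integrality statement about residues; your ``main obstacle'' (ruling out proper pull-backs directly) is not how the elementary proof runs at all.
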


\begin{proof} Suppose that $x_1$ and $x_2$ are algebraically dependent over $C$.  Then $K:=C(x_1,x_2)\subset F$ is a differential subfield of transcendence degree 1 over its field of constants $C$. Let $Y$ be the irreducible, projective curve over $C$ with function field $K$. 

For $i=1,2$ we consider the 1-form $\omega _i:=\frac{dx_i}{x_i^3-x_i^2}=\frac{d (\frac{x_i-1}{x_i})}{(\frac{x_i-1}{x_i})}+dx_i^{-1}$ on 
$Y$. Let $h\in K^*$ be such that $\omega_1=h\cdot \omega_2$. 
Let $D$ be the derivation on $K$. 
Then $D$  acts in a natural way on 1-forms, namely 
$D(f\cdot dg)=D(f)\cdot dg +f\cdot d(Dg)$. One easily computes 
$D(\omega _i)=0$ for $i=1,2$. Then also $D(h)=0$ and so $h\in C$.  

The 1-form  $\tau:=\frac{d( \frac{x_1-1}{x_1})}{(\frac{x_1-1}{x_1})}-h \cdot \frac{d( \frac{x_2-1}{x_2})}{(\frac{x_2-1}{x_2})}=
h\cdot dx_2^{-1}-dx_1^{-1}$ on $Y$ has residues zero because it is exact. The residues of the 1-form 
$\frac{d( \frac{x_i-1}{x_i})}{(\frac{x_i-1}{x_i})}$ are non-zero integers. It follows that $h\in \mathbb{Q}$. 

Write $h=\frac{a}{b}$ with $\gcd (a,b)=1$ and put $f:=(\frac{x_1-1}{x_1})^a(\frac{x_2-1}{x_2})^{-b}$. Then $\frac{df}{f}=d(ax_2^{-1}-bx_1^{-1})$. Now $f$ has no poles or zeros and thus $f\in C^*$. Then
$ax_2^{-1}-bx_1^{-1}\in C$ and $C(x_1)=C(x_2)$. Finally,  $x_1=x_2$ follows from the observation that the only 
$C$-linear automorphism of $C(x)$ which leaves $\frac{dx}{x^3-x^2}$ invariant is the identity.
\end{proof}

\begin{proposition}\label{Rosenlichts example} Let $F$ be a differential field with field of constants $C$. If $x_1,\dots ,x_n \in F$ are distinct non-constant solutions of 
$u'=u^3-u^2$ in $F$, then $x_1,\dots ,x_n$ are algebraically independent over $C$.
\end{proposition}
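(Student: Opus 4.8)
The plan is to reduce the statement to the two-solution case already handled in \cref{Lemma Rosenlicht}, using induction on $n$ together with a base-change trick to control the field of constants. Suppose, for contradiction, that $x_1,\dots,x_n$ are distinct non-constant solutions of $u'=u^3-u^2$ that are algebraically dependent over $C$, and take $n$ minimal with this property; by \cref{Lemma Rosenlicht} we have $n\geq 3$, and every proper subset of $\{x_1,\dots,x_n\}$ is algebraically independent over $C$. We may assume $F$ is algebraically closed.

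First I would form $K$, the algebraic closure inside $F$ of $C(x_3,\dots,x_n)$; this is a differential subfield, and by minimality $x_1,x_2$ each have transcendence degree $1$ over $K$ (so $\operatorname{trdeg}_C K(x_i) = n-1$ for $i=1,2$), while $x_1,x_2$ are algebraically dependent over $K$. The key technical point is that the field of constants of $K(x_1,x_2)$ (or of its algebraic closure) is still $C$: if a larger constant field $\tilde C$ appeared, then $x_2,\dots,x_n$ would be algebraically dependent over $\tilde C$, and since the equation $u'=u^3-u^2$ is defined over the prime field this contradicts the induction hypothesis after base change — here one mimics the constant-field argument in the proof of Theorem 2.1b, using that $(\PP^1,\tfrac{\dd x}{x^3-x^2})$ is new and of general type and stays so after base change by \cref{3.2}.

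Next I would run the argument of \cref{Lemma Rosenlicht} \emph{over the base field $K$ in place of $C$}. Writing $\omega_i = \tfrac{\dd x_i}{x_i^3-x_i^2} = \tfrac{\dd((x_i-1)/x_i)}{(x_i-1)/x_i} + \dd x_i^{-1}$ as a $1$-form on the curve $Y$ over $K$ with function field $K(x_1,x_2)$, and choosing $h\in K(x_1,x_2)^*$ with $\omega_1 = h\,\omega_2$, the derivation identity $\tilde D(\omega_i)=0$ (\cref{3.1}) forces $D(h)=0$, hence $h$ lies in the constant field of $K(x_1,x_2)$, which we have just arranged to be $C$. Then the residue argument (exactness of $\tfrac{\dd((x_1-1)/x_1)}{(x_1-1)/x_1} - h\,\tfrac{\dd((x_2-1)/x_2)}{(x_2-1)/x_2}$ forces $h\in\QQ$) and the subsequent computation show $K(x_1) = K(x_2)$ and finally $x_1 = x_2$, contradicting distinctness.

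The main obstacle is the middle step: ensuring the field of constants does not grow when one passes from $C$ to $K=\overline{C(x_3,\dots,x_n)}$. Everything else is a verbatim repetition of the $n=2$ proof with $C$ replaced by $K$, once one knows $K(x_1,x_2)\const = C$; but establishing that equality genuinely requires the induction hypothesis together with the base-change invariance of "new and general type" from \cref{3.2}(4)--(5), exactly as in the induction step for Theorem 2.1b. An alternative, cleaner route would be to invoke \cref{theorem 1.1} (or Theorem 2.1b) directly, since $(\PP^1,\tfrac{\dd x}{x^3-x^2})$ is new and of general type — but since the point of this section is to give a self-contained elementary proof independent of the geometric machinery, I would instead carry out the inductive reduction to \cref{Lemma Rosenlicht} as above.
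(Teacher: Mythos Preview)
Your proposal is correct and follows essentially the same route as the paper: induct on $n$, pass to the base field $L=C(x_3,\dots,x_n)$ (you take its algebraic closure, an inessential variant), verify that the field of constants of $L(x_1,x_2)$ is still $C$, and then rerun the argument of \cref{Lemma Rosenlicht} verbatim over $L$ to force $x_1=x_2$. One simplification worth noting: since $F$ already has field of constants $C$ by hypothesis, every differential subfield containing $C$ does too, so the constant-field step is automatic and your appeal to \cref{3.2} is unnecessary --- dropping it keeps the argument self-contained, as you yourself say you want.
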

 
\begin{proof} Induction on $n$. \cref{Lemma Rosenlicht} is the case $n=2$. The proof of the induction step 
is obtained by copying the proof of the Lemma with $C$ replaced by the differential field $L:=C(x_3,\dots ,x_n)$. 
Let $\tilde{C}$ denote the field of constants of $L(x_2)$. If $\tilde{C}$ is larger than $C$, then $x_2,\dots ,x_n$ are
algebraically dependent over $\tilde{C}$. This is ruled out by the induction hypothesis and so $C$ is the field of
 constants of $L(x_2)$.  Assume that $x_1,x_2$ are algebraically dependent over $L$, then $x_1$ is algebraic over
 $L(x_2)$ and so $C$ is the field of constants of $L(x_1,x_2)$. As in the proof of the Lemma this implies
 $x_1=x_2$.  
 \end{proof}

 \section*{Acknowledgements}
 The results of this manuscript were inspired by lectures at the Workshop on Differential Galois Theory and Differential Algebraic Groups at the Fields Institute in Toronto in 2017. We thank the organisers for this inspiring event. We also thank Bas Edixhoven and Pierre Parent for their interest in our work.
 
 We are grateful to Antonio Jim\'enez-Pastor and Veronika Pillwein for posing a question to one of us, which led to Section~7 of this text.

\bibliographystyle{plain}

\end{document}